\DeclareMathOperator{\Sym}{Sym}
\DeclareMathOperator{\Frob}{Frob}
\DeclareMathOperator{\GSp}{GSp}
\DeclareMathOperator{\USp}{USp}
\DeclareMathOperator{\End}{End}
\DeclareMathOperator{\Gal}{Gal}
\DeclareMathOperator{\SU}{SU}
\DeclareMathOperator{\U}{U}
\DeclareMathOperator{\Ind}{Ind}
\DeclareMathOperator{\GL}{GL}
\DeclareMathOperator{\SL}{SL}
\DeclareMathOperator{\Sp}{Sp}
\DeclareMathOperator{\Tr}{Tr}
\DeclareMathOperator{\Id}{Id}
\DeclareMathOperator{\SO}{SO}
\DeclareMathOperator{\OO}{O}
\newcommand{\R}{\mathbb{R}}
\newcommand{\C}{\mathbb{C}}
\newcommand{\Z}{\mathbb{Z}}
\newcommand{\Q}{\mathbb{Q}}
\newcommand{\tensor}{\otimes}
\providecommand{\customgenericname}{}
\newcommand{\newcustomtheorem}[2]{%
  \newenvironment{#1}[1]
  {%
   \renewcommand\customgenericname{#2}%
   \renewcommand\theinnercustomgeneric{##1}%
   \innercustomgeneric
  }
  {\endinnercustomgeneric}
}
\newtheorem{thm}{Theorem}[section]
\newtheorem{conj}[thm]{Conjecture}
\theoremstyle{definition}
\newtheorem{defn}[thm]{Definition}
\theoremstyle{remark}
\newtheorem{rmk}[thm]{Remark}
\title{Sato-Tate Distributions on Abelian Surfaces}
\author{Noah Taylor\footnote{The author is supported in part by NSF Grant DMS-$1701703$.}}
\date{July 2, 2019}
\begin{document}
\maketitle
\begin{abstract}We prove a few new cases of the Sato-Tate conjecture, using a new automorphy theorem of Allen et al.  Then in the unproven cases, we use partial results to describe nontrivial asymptotics on the trace of Frobenius, and prove their optimality given current knowledge.\end{abstract}
\section{Introduction}
Let $C$ be a genus $g$ curve over a number field $F$.  Given a prime $v$ of $F$, with residue field $\mathbb{F}_v$ of size $q_v$, a theorem of Hasse says that the number $N_v$ of $\mathbb{F}_v$ points on $C$ is between $q_v+1-2g\sqrt{q_v}$ and $q_v+1+2g\sqrt{q_v}$, so that \[a_v:=\frac{q_v+1-N_v}{\sqrt{q_v}}\in[-2g, 2g].\]The Sato-Tate conjecture asks for the distribution of the $a_v$ in $[-2g, 2g]$ as $q_v\rightarrow\infty$, and predicts that they are equidistributed (after passing to a finite extension $F'/F$) with respect to a measure depending on the Mumford-Tate group of the Jacobian of $C$.  For example, if $E$ is an elliptic curve with CM, the distribution is given either by the pushforward of the Haar measure of $\SO(2)$ or of $\OO(2)$ under the trace map.  It has also been proven in \cite{MR2630056} and \cite{MR2827723} that if $F$ is totally real and $E$ does not have CM, then the distribution is the pushforward of the Haar measure of $\SU(2)$.

We look at genus $g=2$ curves and $2$-dimensional abelian surfaces.  In complete analogy with the elliptic curve case, \cite{MR2982436} describes $52$ possible subgroups of $\USp(4)$ whose pushforwards describe the normalized point counts $a_v$ for a genus $2$ curve, and notes that it is likely possible to prove the Sato-Tate conjecture in many cases with a similar method to that of the elliptic curve case.  \cite{MR3660222} uses the powerful potential automorphy theorem of \cite{MR3152941} to prove the conjecture for all but five of the non-generic cases that occur over totally real fields.  In this paper we will use a more powerful potential automorphy theorem of \cite{ACCGHLNSTT} to extend the proof in \cite{MR3660222}, and then we extend \cite{MR3660222}'s work to prove the conjecture for four other subgroups.  Of course, given the Jacobian $J(C)$ of a genus $2$ curve $C$, we can obtain the numbers $a_v$ directly from $J(C)$, by taking the normalized trace of the action of $\Frob_v$, so we may forget about the curve $C$ entirely and work directly with abelian surfaces.

The theorems we prove are as follows:

\begin{customthm}{3.4}If $A/F$ is an abelian surface, $F$ a totally real field, which has a two-dimensional real endomorphism ring defined over a quadratic extension of $F$ which is either totally real or CM, then the Sato-Tate conjecture holds for $A$.\end{customthm}
\begin{customthm}{3.6}If $A/F$ is a (not necessarily simple) abelian surface, $F$ a totally real field, which has quaternionic multiplication defined over a dihedral extension, then the Sato-Tate conjecture holds for $A$.\end{customthm}

These two theorems are equidistribution results, so we know the exact distributions of the $a_v$.  However, we cannot currently prove the Sato-Tate conjecture for $A$ if the endomorphism ring of $A$ is $\Z$, or if the quadratic extension described in Theorem $3.4$ is neither totally real or CM.  In these cases, we prove lesser results:

\begin{customthm}{4.1}If $A/F$ is an abelian surface, $F$ a totally real field, then for any $\varepsilon>0$, $a_v<-\dfrac{2}{3}+\varepsilon$ for a positive proportion of primes $v$, and $a_v>\dfrac{2}{3}-\varepsilon$ for a positive proportion of primes $v$.\end{customthm}
\begin{customthm}{4.3}If $A/F$ is an abelian surface over a totally real field which has a two-dimensional real endomorphism ring defined over a quadratic extension of $F$, then $a_v<-2.47$ for a positive proportion of primes and $a_v>2.47$ for a positive proportion of primes.\end{customthm}

The paper is divided as follows: In section $2$, we set up the terminology and state the Sato-Tate conjecture precisely.  Section $3$ is devoted to proving Theorems $3.4$ and $3.6$ above, and the goal of section $4$ is to prove the asymptotics in Theorems 4.1 and 4.3, as well as others in Theorems 4.2 and 4.4.  The appendix in section $5$ explains the best possible theorems we can obtain in section $4$.
\section{Setup}
\subsection{The Conjecture}
To set up the Sato-Tate conjecture, we follow \cite[Section 2]{MR2982436}.  Fix a number field $F$, an embedding into $\overline{\Q}$, and an embedding of $\overline{\Q}$ into $\C$.  Let $A$ be an abelian variety of dimension $2$ over $F$.  We choose a polarization of $A$.  Given a prime $\ell$, this allows the identification of the $\ell$-adic Tate module with the etale and singular homologies \[V_{\ell}(A)\simeq H_{1, \text{et}}(A_{\overline{\Q}},\Q_\ell)\simeq H_{1, \text{et}}(A_{\C},\Q_\ell)\simeq H_1(A^{\text{top}}_{\C},\Q_\ell)\simeq H_1(A^{\text{top}}_{\C},\Q)\otimes_{\Q}\Q_{\ell}.\]The Weil pairing on the dual of the Tate module $\widehat{V_{\ell}(A)}$ corresponds to the cup product pairing on the cohomologies, so it is a nondegenerate alternating pairing and, given a symplectic basis of $\widehat{V_{\ell}(A)}$, induces a continuous map $\rho_{A, \ell}: G_F\rightarrow \GSp_4(\overline{\Q_{\ell}})$.  We let $G_\ell$ be the image of this map, and $G_\ell^{\text{Zar}}$ be the Zariski closure in $\GSp_4(\overline{\Q_\ell})$.  Then we let $G_F^1$ be the kernel of the cyclotomic character $\chi_\ell:G_F\rightarrow \mathbb{Z}_\ell^{\times}$, so that $g\in G_F^1$ acts trivially on the Weil pairing.  Then $G_\ell^1$ is the image of $G_F^1$ under $\rho_{A, \ell}$ and $G_\ell^{1, \text{Zar}}$ is the Zariski closure.  Because $G_F^1$ acts trivially on the Weil pairing, reconsidering it as a pairing on the vector space, $G_\ell^{1, \text{Zar}}$ is the kernel of the similitude character \[\psi: G_\ell^{\text{Zar}}\rightarrow \mathbb{Z}_\ell^{\times}, \langle h v, hw\rangle=\psi(h)\langle v, w\rangle.\]

Fix an isomorphism $\iota: \overline{\Q_\ell}\rightarrow\C$ for this $\ell$.  We then define $G=G_\ell^{\text{Zar}}\otimes_{\overline{\Q_\ell}}\C$ and $G^1=G_\ell^{1,\text{Zar}}\otimes_{\overline{\Q_\ell}}\C$; then $G/G^1\simeq\C$ via the similitude character.  We look at the image of $\Frob_v$ in this quotient for $v$ a prime of $F$ with residue field $\mathbb{F}_{q_v}$.  Certainly $\Frob_v(\zeta_{\ell^n})=\zeta_{\ell^n}^{q_v}$ so $\Frob_v$ maps to $q_v$.  An argument of Deligne, summarized in \cite[Section~8.3.2]{MR2920749}, shows that the center of the original $\GSp(4)$ lies in the center of $G$, so we may divide $\rho_{A, \ell}(\Frob_v)$ by $q_v^{\frac{1}{2}}$ to get an element $g_v$ in $G^1$ whose eigenvalues have norm $1$ because of the Weil conjectures.

\begin{defn}The Sato-Tate group $ST_A$ of $A$ is a maximal compact Lie subgroup of $G^1$ inside $\USp(4)$, which depends on $\ell$ and the embedding $\iota$.\end{defn}

The element $g_v$ has eigenvalues of norm $1$ so its semisimple component (and even itself, because as described in the errata to \cite{MR2982436}, $g_v$ is already semisimple) lies in some conjugate of $ST_A$; we let $s(v)$ denote its conjugacy class.  The Sato-Tate conjecture is as follows:

\begin{conj}The elements $s(v)$ are equidistributed among the conjugacy classes of $ST_A$, under the pushforward of the Haar measure from $ST_A$.\end{conj}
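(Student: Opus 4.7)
The plan is to attack Conjecture 2.2 through Weyl's equidistribution criterion. Since $ST_A \subset \USp(4)$ is compact, equidistribution of the conjugacy classes $s(v)$, ordered by $q_v$, with respect to Haar measure is equivalent to the statement that for every nontrivial irreducible character $\chi$ of $ST_A$, the averages of $\chi(s(v))$ tend to zero. By the standard Tauberian machinery (Wiener--Ikehara, as in Serre's approach to Sato--Tate), this in turn follows once we know that the associated $L$-function
\[
L(s, \chi) \;=\; \prod_v \det\!\bigl(1 - \chi(s(v))\, q_v^{-s}\bigr)^{-1}
\]
extends to a holomorphic function on the closed half plane $\mathrm{Re}(s) \geq 1$ that is non-vanishing on the line $\mathrm{Re}(s) = 1$, for every such nontrivial $\chi$.

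Every irreducible representation of $ST_A$ arises by restriction from a tensor-polynomial construction applied to the standard $4$-dimensional symplectic representation of $\GSp(4)$. Consequently, the character value $\chi(s(v))$ can be expressed as a $\Q$-linear combination of normalized traces $\Tr \rho(\Frob_v)$ for virtual $\ell$-adic Galois representations $\rho$ built functorially from $\rho_{A, \ell}$. The required analytic continuation thus reduces to (potential) automorphy of these auxiliary representations: once they are known to be cuspidal automorphic over some finite totally real extension $F'/F$, holomorphy and non-vanishing of $L(s,\chi)$ on $\mathrm{Re}(s) \geq 1$ is standard, and one descends to $F$ by Brauer induction. The strategy is then to split into cases by the structure of $\End(A_{\overline{F}}) \otimes \Q$: a sufficiently large endomorphism algebra forces $\rho_{A,\ell}$ to decompose, after a finite base change, into constituents of dimension at most $2$, reducing everything to Hilbert modular forms and their symmetric powers, where \cite{MR3152941} and the sharper \cite{ACCGHLNSTT} apply. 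Intermediate endomorphism structures leave somewhat larger constituents that can nevertheless be handled by the same potential automorphy theorems, suitably arranged.

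The principal obstacle is the fully generic case $\End(A_{\overline{F}}) = \Z$, where $\rho_{A, \ell}$ has image essentially all of $\GSp_4$ and admits no nontrivial decomposition. Here one would need potential automorphy of arbitrary symmetric powers of a genuinely $4$-dimensional symplectic Galois representation, which current methods do not supply. This is precisely why Section 3 restricts to the endomorphism configurations of Theorems 3.4 and 3.6, in which a quadratic or dihedral descent together with the extra endomorphisms places every Galois representation whose automorphy we need within the dimension range that \cite{ACCGHLNSTT} can reach; and why in the remaining cases Section 4 falls back on the partial asymptotics of Theorems 4.1 and 4.3, extracted from the finitely many symmetric-power $L$-functions whose non-vanishing on $\mathrm{Re}(s) = 1$ is already unconditionally known.
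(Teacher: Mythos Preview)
The statement you are addressing is Conjecture 2.2, which the paper explicitly records as an open conjecture; there is no proof of it in the paper to compare against. What the paper does is exactly what your proposal sketches: it invokes Serre's criterion (Theorem 2.3) to reduce equidistribution to invertibility of $L^S(r,s)$ on $\Re s\geq 1$ for each nontrivial irreducible $r$ of $ST_A$, then case-splits on the Galois type $(\End_L(A)\otimes\R,\Gal(L/F))$ and, in the accessible cases (Theorems 3.4 and 3.6), pushes the representations of $ST_A$ back to Galois representations built from $\rho_{A,\ell}$, establishes their potential automorphy via \cite{ACCGHLNSTT} or \cite{MR3152941}, and descends using cyclic base change, Rankin--Selberg, and Brauer. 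Your outline captures this accurately, including the honest admission that the generic case $\End(A_{\overline\Q})=\Z$ is beyond reach and that Section~4 substitutes partial asymptotic information.

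One small correction: you say the auxiliary representations need to be automorphic over a finite \emph{totally real} extension, but a key point of the paper (Remark 2.6 and the proof of Theorem 3.4) is precisely that \cite{ACCGHLNSTT} allows passage through \emph{CM} extensions, which is what makes the $\mathbf{B}[C_2]$ case with $L$ a CM field tractable where \cite{MR3152941} alone was not. Otherwise your summary of the strategy and its limitations is faithful to the paper, but it is a strategy, not a proof---as indeed it cannot be, since the conjecture remains open in the cases \textbf{A} and the non-CM, non-totally-real instance of $\mathbf{B}[C_2]$.
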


We record that the Sato-Tate group has a common model over $\Q$ over all $\ell$, as in \cite[Theorem~2.16]{MR2982436}, but it's not known whether the conjugacy classes $s(v)$ themselves are independent of $\ell$.

\subsection{Proof strategy}
Suppose $S$ is the set of primes outside of which $\rho_{A, \ell}$ is unramified.  The general idea for proof is laid out in \cite{MR1484415}; therein the following theorem is shown.
\begin{thm}Suppose that, for any irreducible representation $r$ of $ST_A$, the $L$-function \[L^S(r, s)=\prod_{v\not\in S}\frac{1}{\det(1-r(s(v))q_v^{-s})}\]has a meromorphic extension to the half-plane $\text{Re}(s)\geq 1$, with no poles or zeroes except possibly at $s=1$.  Then the elements $s(v)$ are equidistributed in the conjugacy classes of $ST_A$ if and only if the $L$-functions $L^S(r, s)$ for irreducible nontrivial $r$ have no zero or pole at $s=1$.\end{thm}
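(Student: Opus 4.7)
The plan is to deduce the biconditional from Weyl's equidistribution criterion combined with a Tauberian theorem of Wiener-Ikehara/Delange type applied to each $L^S(r,s)$. By Weyl's criterion the classes $s(v)$ are equidistributed with respect to the pushforward of Haar measure on $ST_A$ if and only if, for every continuous class function $f$ on $ST_A$,
\[
\frac{1}{\pi_F(x)} \sum_{\substack{v \notin S \\ q_v \leq x}} f(s(v)) \longrightarrow \int_{ST_A} f \, d\mu,
\]
where $\pi_F(x) := \#\{v \notin S : q_v \leq x\}$. By Peter-Weyl and Stone-Weierstrass the irreducible characters $\chi_r$ are uniformly dense among continuous class functions, so it suffices to check this limit on the $\chi_r$. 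For $r$ trivial it holds automatically; for $r$ nontrivial irreducible one has $\int \chi_r \, d\mu = 0$, and equidistribution reduces to the estimate $\sum_{q_v \leq x} \chi_r(s(v)) = o(\pi_F(x))$.

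Next, I would relate this partial sum to the behavior of $L^S(r,s)$ at $s=1$. The logarithmic Euler product gives
\[
\log L^S(r,s) = \sum_{v \notin S} \chi_r(s(v)) q_v^{-s} + \sum_{v \notin S} \sum_{n \geq 2} \frac{\chi_r(s(v)^n)}{n} q_v^{-ns},
\]
and since $|\chi_r(s(v)^n)| \leq \dim r$ (as $s(v)$ lies in a conjugate of $\USp(4)$), the $n \geq 2$ tail converges absolutely and is holomorphic on $\mathrm{Re}(s) > 1/2$. Let $m_r \in \Z$ denote the order of vanishing of $L^S(r,s)$ at $s=1$ (well-defined by the hypothesis, with no other singularities on $\mathrm{Re}(s) = 1$). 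We then deduce that $\sum_v \chi_r(s(v)) q_v^{-s} + m_r \log(s-1)$ extends holomorphically across $\mathrm{Re}(s) = 1$. A Tauberian theorem for Dirichlet series with complex coefficients yields
\[
\sum_{\substack{v \notin S \\ q_v \leq x}} \chi_r(s(v)) \sim -m_r \cdot \pi_F(x),
\]
with normalization provided by Landau's prime ideal theorem $\pi_F(x) \sim x/\log x$. Hence the partial sum is $o(\pi_F(x))$ if and only if $m_r = 0$, which is the required biconditional for each nontrivial irreducible $r$.

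The main obstacle is the Tauberian step: because $\chi_r(s(v))$ is complex-valued with no positivity structure, the basic Wiener-Ikehara theorem does not apply directly. The correct tool is a refinement of Delange type (essentially the one used in \cite{MR1484415}) handling Dirichlet series whose logarithm has a prescribed logarithmic singularity at the boundary of convergence. The hypothesis that $L^S(r,s)$ is meromorphic on $\mathrm{Re}(s) \geq 1$ with all singularities concentrated at $s=1$ is exactly what is needed to verify the boundary-regularity input to this Tauberian theorem; once that is in hand, the remainder is routine bookkeeping of the contributions of each nontrivial irreducible $r$.
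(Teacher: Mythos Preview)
The paper does not supply its own proof of this statement; it is quoted as a theorem of Serre \cite{MR1484415} and used as a black box. Your sketch is essentially the argument Serre gives there: Weyl's criterion reduces equidistribution to character sums, Peter--Weyl gives density of irreducible characters among class functions, and a Delange-type Tauberian theorem converts the analytic behavior of $L^S(r,s)$ at $s=1$ into the required asymptotic for $\sum_{q_v\leq x}\chi_r(s(v))$. So your proposal is correct and follows the same route as the cited source.
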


We denote the property of having no zeroes or poles on a region invertibility.  The $L$-function has factors at primes of $S$ as well, but their factors do not add poles or zeroes so we ignore them.  To show invertibility of these $L$-functions, the only known method is to equate them to $L$-functions of automorphic forms, a la \cite{MR1954941}, \cite{MR2630056}.  \cite{MR3660222} covers most cases using \cite[Theorem~5.4.1]{MR3152941}; we introduce a new more widely applicable theorem of \cite{ACCGHLNSTT}.  We refer to \cite[Section~5.1]{MR3152941} for the definition of a weakly compatible system.

\begin{defn}A weakly compatible system of representations of $G_F$ is a $5$-tuple \\ $(M, S, \{Q_v(x)\}, \{r_{\lambda}\}, \{H_{\tau}\})$ with $S$ a finite set of $F$-primes satisfying\begin{itemize}
\item $M$ is a number field, and $\{r_{\lambda}\}$ is a set of representations of $G_F$ each defined over $M$, indexed over the primes $\lambda$ of $M$.  If $v\not\in S$ is a prime of $F$, then for $\lambda$ not over the same rational prime $p$ as $v$, $r_{\lambda}$ is unramified at $v$.
\item The polynomials $Q_v(x)$ have rational coefficients and the characteristic polynomial of $r_{\lambda}(\Frob_v)$ is equal to $Q_v(x)$, independent of $\lambda$.
\item If $v$ and $\lambda$ are over the same rational prime $p$, then $r_{\lambda}$ is de Rham at $v$; furthermore, if $v\not\in S$, then $r_{\lambda}$ is crystalline at $v$.
\item For each embedding $\tau: F\xhookrightarrow{} \overline{M}$, the Hodge-Tate weights of $r_{\lambda}$ are given by the multiset $H_{\tau}$, and are in fact independent of $\lambda$.
\end{itemize}
\end{defn}

\begin{thm}[{\cite[Corollary~7.1.11]{ACCGHLNSTT}}]Suppose that $F$ is a CM field and that the $5$-tuple \\ $\mathcal{R}=(M, S, \{Q_v(x)\}, \{r_{\lambda}\}, \{H_{\tau}\})$ is a rank $2$ weakly compatible system of $l$-adic representations of $G_F$ such that $H_{\tau}=\{0, 1\}$ for all $\tau$ and such that $\mathcal{R}$ is strongly irreducible.  If $m$ is a nonnegative integer, then there exists a finite CM extension $F_m/F$ with $F_m/\Q$ Galois such that the weakly compatible system $\text{Symm}^m\mathcal{R}|_{G_{F_m}}$ is automorphic.\end{thm}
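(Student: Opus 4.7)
The plan is to prove this by a potential automorphy argument in the Taylor--Wiles tradition, now adapted to CM base fields, relying on three ingredients: (i) the new automorphy lifting theorems over CM fields of \cite{ACCGHLNSTT}; (ii) a Moret--Bailly construction producing residual automorphy; and (iii) the compatible system structure of $\mathcal{R}$ to spread automorphy across all primes from a single well-chosen one.

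First I would pass to a finite solvable CM extension so that $r_\lambda$ becomes essentially conjugate self-dual; then $\Sym^m r_\lambda$ is conjugate self-dual with regular Hodge--Tate weights $\{0, 1, \ldots, m\}$ at every infinite place. Fix a prime $\lambda$ of $M$ of sufficiently large residue characteristic $\ell$ at which $\mathcal{R}$ is crystalline and Fontaine--Laffaille; the strong irreducibility of $\mathcal{R}$, combined with Larsen-type results on the $\ell$-adic image, ensures that for a density-one set of $\ell$ the image of $\overline{\Sym^m r_\lambda}$ in $\GL_{m+1}(\overline{\mathbb{F}_\ell})$ is ``enormous'' in the sense required by the lifting theorems of \cite{ACCGHLNSTT}. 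Next, to obtain residual automorphy of $\overline{\Sym^m r_\lambda}$, I would apply a Moret--Bailly argument to a moduli problem parametrizing rank $2$ motives of Hodge--Tate weight $\{0,1\}$ with prescribed mod $\ell$ Galois module equal to $\overline{r_\lambda}$, for instance a modular curve twist; over a sufficiently large CM extension $F_m/F$, Galois over $\Q$, this yields an auxiliary compatible system $\mathcal{R}'$ over $F_m$ coming from a modular elliptic curve with $\overline{r'_\lambda} \cong \overline{r_\lambda}|_{G_{F_m}}$, and after further solvable base change one may invoke the symmetric power potential automorphy results of \cite{MR3152941} over a maximal totally real subfield to conclude that $\Sym^m \mathcal{R}'$ is automorphic, and hence so is its mod $\lambda$ reduction.

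Finally I would apply the automorphy lifting theorem of \cite{ACCGHLNSTT} over $F_m$ to promote residual automorphy of $\Sym^m r_\lambda|_{G_{F_m}}$ to genuine automorphy, and propagate the conclusion to the full compatible system $\Sym^m \mathcal{R}|_{G_{F_m}}$ by the compatibility of characteristic polynomials of Frobenius. The main obstacle, and the reason this theorem was out of reach before \cite{ACCGHLNSTT}, is precisely the lifting step over a CM field: classical Taylor--Wiles patching in the polarized totally real setting exploits both self-duality and the concentration of Hecke cohomology in a single degree, neither of which survives passage to CM fields. Overcoming this requires the derived patching machinery of \cite{ACCGHLNSTT}, which rests on the Caraiani--Scholze vanishing theorem for the completed cohomology of unitary Shimura varieties and on new deformation-theoretic tools for conjugate self-dual Galois representations. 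Arranging the residual input (enormous image, compatibility of local conditions at primes above $\ell$ and in $S$ between $\mathcal{R}$ and the auxiliary $\mathcal{R}'$, and the existence of a CM extension $F_m$ Galois over $\Q$ produced by Moret--Bailly) is comparatively routine once this lifting machinery is granted.
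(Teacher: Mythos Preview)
The paper does not prove this theorem; it is simply quoted as \cite[Corollary~7.1.11]{ACCGHLNSTT} and used as a black box. So there is no ``paper's own proof'' to compare against, and what you have written is really a sketch of how the cited result is established in \cite{ACCGHLNSTT} rather than a proof the present paper omits.

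That said, your outline captures the broad architecture (choose a good prime, arrange residual automorphy via Moret--Bailly, apply the new CM automorphy lifting theorems, spread through the compatible system), and you correctly identify the derived/Calegari--Geraghty patching and the Caraiani--Scholze vanishing input as the essential new ingredients. Two points where your sketch drifts from what \cite{ACCGHLNSTT} actually does: first, the auxiliary motive used to produce residual automorphy is not a modular elliptic curve but a piece of the cohomology of the Dwork family of hypersurfaces, whose automorphy is known for independent reasons; using an elliptic curve over a CM field would be circular, since modularity of such curves is one of the target applications. Second, the step where you ``invoke the symmetric power potential automorphy results of \cite{MR3152941} over a maximal totally real subfield'' does not straightforwardly give what you need: the point of \cite{ACCGHLNSTT} is precisely that one cannot simply descend to the totally real subfield and apply \cite{MR3152941}, and the residual automorphy input is instead obtained by linking to the Dwork motive at an auxiliary prime and lifting there first. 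These are genuine gaps in the sketch, though the overall shape is right.
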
  We recall that a strongly irreducible system is one where each representation is irreducible even after restricting to finite-index subgroups of $G_F$.

\begin{rmk}The difference between this theorem and \cite[Theorem~5.4.1]{MR3152941} that we take advantage of is that \cite[Theorem~5.4.1]{MR3152941} requires all towers to be either CM or totally real.  In contrast, \cite[Corollary~7.1.11]{ACCGHLNSTT} allows us to base-change from our totally real field $F$ to a CM field $F'$, find an extension $F_m$ over which the compatible systems $\text{Symm}^m\mathcal{R}|_{G_{F_m}}$ are automorphic, and be allowed the added condition that $F_m/F$ is Galois.  This is not possible with the theorem of \cite{MR3152941}; in asking that $F_m/F$ be Galois, we are only allowed base-change to totally real $F'$.\end{rmk}
\section{Sato-Tate for certain $ST_A$}
We introduce the cases of the Sato-Tate conjecture we will prove.  Let $A$ be an abelian surface defined over a field $F$.  If $L$ is the smallest field over which all endomorphisms of $A$ are defined, we define the Galois type of $A$ to be the pair $(\End_L(A)\tensor\R, \Gal(L/F))$ of a real algebra and a group with an action on the algebra.  \cite[Theorem 4.3]{MR2982436} proves that there is a correspondence between the Sato-Tate group and the Galois type of an abelian surface with the following property: if the type $(E, G)$ corresponds to the Sato-Tate group $K$, then the algebra $E$ corresponds bijectively to the identity component $K_0$ of $K$, and $G$ is isomorphic to the component group $K/K_0$.

Therefore, we can equivalently divide the conjecture into cases indexed by the connected component of the Sato-Tate group or by the endomorphism algebra $\End_L(A)\tensor\R$, which can then be further subdivided by including the component group.  There are $6$ possible endomorphism algebras laid out in \cite[Theorem 4.3]{MR2982436} listed below, along with the corresponding Sato-Tate connected component and its embedding into $\USp(4)$:
\begin{itemize}\item\textbf{A}: $\End_L(A)\tensor\R=\R$, corresponding to $ST^0_A=\USp(4)$
\item\textbf{B}: $\End_L(A)\tensor\R=\R\times\R$, corresponding to $ST^0_A=\SU(2)\times\SU(2)$ via $M_1\times M_2\rightarrow\begin{pmatrix}M_1&0\\0&M_2\end{pmatrix}$.
\item\textbf{C}: $\End_L(A)\tensor\R=\R\times\C$, corresponding to $ST^0_A=\SU(2)\times\U(1)$ via $M\times z\rightarrow\left(\begin{smallmatrix}M&\,&\,\\ \,&z&\, \\ \,&\,&\overline{z}\end{smallmatrix}\right)$
\item\textbf{D}: $\End_L(A)\tensor\R=\C\times\C$, corresponding to $ST^0_A=\U(1)\times\U(1)$ via $z\times w\rightarrow \left(\begin{smallmatrix}z&\,&\,&\,\\ \,&\overline{z}&\,&\, \\ \,&\,&w&\, \\ \,&\,&\,&\overline{w}\end{smallmatrix}\right)$
\item\textbf{E}: $\End_L(A)\tensor\R=M_2(\R)$, corresponding to $ST^0_A=\SU(2)$ via $M\rightarrow\begin{pmatrix}M&0\\0&\overline{M}\end{pmatrix}$
\item\textbf{F}: $\End_L(A)\tensor\R=M_2(\C)$, corresponding to $ST^0_A=\U(1)$ via $z\rightarrow\begin{pmatrix}z\cdot I_2&0\\0&\overline{z}\cdot I_2\end{pmatrix}$
\end{itemize}
Further subdividing this list, we obtain 52 distinct Galois types, corresponding to 52 distinct Sato-Tate groups.  Of these, 35 arise as the Sato-Tate group of an abelian surface defined over a totally real field, and 34 of those arise from an abelian surface defined over $\Q$.  Almost nothing is known about the single group of type \textbf{A}; in \cite{MR3660222}, the Sato-Tate conjecture was fully proven for all groups of types \textbf{D} and \textbf{F}, for all totally real abelian surfaces giving rise to groups of type \textbf{C}, and for all totally real abelian surfaces giving rise to one of two groups of type \textbf{B} and six of ten groups of type \textbf{E}.  In addition, assuming that $L$ was also totally real, all other cases were proven.  We describe the remaining cases and prove them with a weakened hypothesis on $L$.

\subsection{Preliminaries}
Before we discuss specific Sato-Tate groups, let us recall standard facts about Galois representations coming from the abelian varieties we study.
\begin{defn}Suppose $A$ is an abelian variety defined over $F$.  We say $A$ is of $\GL_2$-type if it is isogenous over $F$ to a product $A_1\times A_2\times \ldots A_k$ of simple abelian varieties, each also defined over $F$, and with a field $K_i\xhookrightarrow{}\End_F(A_i)\tensor\Q$ with $[K_i:\Q]=\dim(A_i)$.\end{defn}
Given a simple abelian surface $A/F$ of $\GL_2$-type with field $K$ and a rational prime $\ell$, the dual of the $\ell$-adic Tate module $T_\ell$ gives rise to an $\ell$-adic Galois representation $G_F\rightarrow\GL_4(\Q_{\ell})$, isomorphic to the $\ell$-adic etale cohomology of $A$.  The image lands in $\GL_2(\Q_{\ell}\tensor K)$.  For each embedding $\lambda: K\rightarrow\overline{\Q_{\ell}}$, we get a map from this image to $\GL_2(K_\lambda)$ for $K_\lambda$ the completion of $K$ at $\lambda$.  Thus for each embedding of $K$ into $\overline{\Q_\ell}$ for each $\ell$ we obtain a representation $\rho_{A,\lambda}:G_F\rightarrow\GL_2(K_{\lambda})$.  These form a weakly compatible system $(\rho_{A, \lambda})_{\lambda}$.

\begin{thm}[{\cite[Theorems 3.1, 3.2]{MR1212980}}]The weakly compatible system $(\rho_{A, \lambda})_{\lambda}$ is regular of Hodge-Tate weights $0$ and $1$, totally odd and pure of weight $1$.  If $K$ is a real quadratic field, then $\det\rho_{A, \lambda}=\chi_{\ell}$, the $\ell$-adic cyclotomic character; if $K$ is imaginary quadratic, then $\det\rho_{A, \lambda}=\epsilon\tensor\chi_{\ell}$ for some finite-image character $\epsilon$ independent of $\ell$.\end{thm}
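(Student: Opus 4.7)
The plan is to exploit the fact that $K \otimes_\Q \Q_\ell$ acts on the dual Tate module $V_\ell(A)$, making $V_\ell(A) \otimes \overline{\Q_\ell} = \bigoplus_\lambda V_\lambda$ a direct sum of two-dimensional $G_F$-stable $K_\lambda$-subspaces, and to read off each assertion from this decomposition together with standard facts about $V_\ell(A)$.

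First I would establish the Hodge-Tate weights. Faltings' comparison theorem gives that $V_\ell(A)$ is Hodge-Tate at every place above $\ell$ with weights $0$ and $1$, each of multiplicity $\dim(A) = 2$. Since the $K$-action on $V_\ell(A)$ commutes with $G_F$, it respects the Hodge-Tate decomposition, so each two-dimensional piece $V_\lambda$ inherits the weight multiset $\{0,1\}$, which is regular. Purity of weight $1$ is then immediate from the Weil conjectures: at any prime $v$ of good reduction, the Frobenius eigenvalues on $V_\ell(A)$ all have absolute value $q_v^{1/2}$, hence so do those on each $V_\lambda$. Rationality of the polynomials $Q_v(x)$ descends from rationality (indeed integrality) of the Frobenius characteristic polynomial on $V_\ell(A)$.

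Next I would analyze the determinant using the Weil pairing $e_\ell\colon V_\ell(A) \times V_\ell(A) \to \Q_\ell(1)$ together with the Rosati involution attached to a polarization. The Rosati involution stabilizes $K$ and acts either as the identity (if $K$ is real quadratic) or as complex conjugation (if $K$ is imaginary quadratic). In the real quadratic case, after adjusting $e_\ell$ by a totally positive element of $K$, the pairing becomes $K$-linear and decomposes as an orthogonal sum of $K_\lambda$-valued symplectic forms on the $V_\lambda$, forcing $\det \rho_{A,\lambda} = \chi_\ell$. In the imaginary quadratic case, the Rosati action makes the pairing $K$-semilinear: it matches $V_\lambda$ with $V_{\bar\lambda}$ rather than pairing each summand with itself, so one only immediately gets $\det \rho_{A,\lambda} \cdot \det \rho_{A,\bar\lambda} = \chi_\ell^2$. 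The individual determinant must therefore be of the form $\chi_\ell \otimes \epsilon$ for some character $\epsilon$; finiteness of its image and independence of $\ell$ follow because the $Q_v(x)$ are independent of $\ell$ and a compatible system of characters with the same characteristic polynomials is controlled by a single number-field-valued character. Total oddness is then automatic, since $\chi_\ell(c) = -1$ at every complex conjugation $c$ and $\epsilon$ can be arranged to be trivial on such $c$ by the compatibility with the polarization.

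The main obstacle I anticipate is the imaginary quadratic determinant analysis, specifically bookkeeping the semilinearity of the Weil pairing under the Rosati involution and verifying that the correction character $\epsilon$ is both finite-order and $\ell$-independent. The remaining parts reduce to direct applications of Faltings, the Weil conjectures, and the elementary structure of the Rosati involution restricted to $K$.
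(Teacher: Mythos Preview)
The paper does not give its own proof of this statement: it is stated as a theorem of Ribet, with citation \cite[Theorems 3.1, 3.2]{MR1212980}, and the paper moves on immediately afterward. So there is nothing to compare against in the paper itself.

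Your sketch is essentially the standard Ribet argument and is on the right track. One small point: your justification of total oddness in the imaginary quadratic case is too quick. You write that ``$\epsilon$ can be arranged to be trivial on such $c$ by the compatibility with the polarization,'' but the Weil pairing in that case only relates $V_\lambda$ to $V_{\bar\lambda}$, so it does not directly tell you $\epsilon(c)=1$. The usual way to see total oddness is to look at complex conjugation acting on $H^1(A_\C,\C)=H^{1,0}\oplus H^{0,1}$: the $K$-action respects the Hodge decomposition, each two-dimensional $K$-eigenspace has one $(1,0)$ and one $(0,1)$ class, and $c$ swaps them, so $\det\rho_{A,\lambda}(c)=-1$ directly. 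This is independent of the determinant analysis via the pairing.
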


In each case below, we will consider the irreducible representations of the Sato-Tate group.  We will extend these in a natural way to representations of $G^1$.  These will be algebraic representations of $G^1$, so that we get compatible systems of representations of $G^{1,\text{Zar}}_{\ell}$.  We can then obtain representations of $G^{\text{Zar}}_{\ell}$ by extending to the central $\mathbb{G}_m$.  Finally obtaining this, we get a compatible system of representations of the Galois group $G_F$, and we can thus use Theorem 2.5 above, combined with Rankin-Selberg theory, to show that the original $L$-function is invertible, as required.  This method will be detailed further in the subsections below.

\subsection{$\textbf{B}[C_2]$}
When we discuss $\textbf{B}[C_2]$, the Sato-Tate group is $\langle\SU(2)\times\SU(2),J\rangle$ where $J=\left(\begin{smallmatrix}\,&\,&\,&1\\ \,&\,&\text{-1}&\,\\ \,&\text{-1}&\,&\, \\ 1&\,&\,&\,\end{smallmatrix}\right)$.  This corresponds to either the case where $A$ is isogenous to a direct sum of nonisogenous elliptic curves, each without CM, or when $A$ is simple but has multiplication by a real quadratic field.  In these cases, $\Q\tensor\End_{\overline{\Q}}(A)$ is either $\Q\times\Q$ or real quadratic.  Conjecture 2.2 in the first case has been proven as \cite[Theorem 5.4]{MR2641185} assuming a few ``Expected Theorems''.  These have been proven since the writing of the paper; see \cite{MR2827723} for a discussion.  We henceforth assume $\Q\tensor\End_{\overline{\Q}}(A)=K$ is a real quadratic field.  Because we're in the $\textbf{B}[C_2]$ case, $A$ is not of $\GL_2$ type over $F$, but is of $\GL_2$ type over a quadratic extension.

We look first at representations of $ST^0_A=\SU(2)\times\SU(2)$ which is an index $2$ subgroup of $ST_A$.  The irreducible representations of $\SU(2)$ are $\Sym^k(St)$ for $St$ the standard $2$-dimensional representation and $k\geq 0$; hence the irreducible representations of $\SU(2)\times\SU(2)$ are $r_{k, l}=\Sym^k(St)\tensor\Sym^{l}(St)$ for $k, l\geq 0$.  We deduce the representations of $ST_A$ using the following standard theorem of Clifford theory (in this form found as \cite[Lemma 23]{MR3660222}, the proof being the author's own):
\begin{thm}If $H\leq G$ is an index $2$ subgroup, and $r$ is a finite-dimensional irreducible representation of $H$, then $r$ extends to a representation of $G$ if and only if $r$ is isomorphic to $r^x$, where $r^x$ is the representation of $H$ defined as $r^x(h)=r(xhx^{-1})$ for $x\in G\backslash H$.  If this is the case, then $r$ extends to exactly two nonisomorphic irreducible representations $r_0$ and $r_0\tensor\chi$ for $\chi$ the nontrivial character $G/H\rightarrow\{\pm 1\}$.  The irreducible representations are exactly those arising from such $r$, along with the inductions $\Ind_H^G\rho$ of all representations $\rho$ of $H$ that do not satisfy the above property.\end{thm}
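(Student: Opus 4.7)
The plan is to treat this as a standard Clifford theory / Mackey decomposition argument, broken into the three separate assertions: (i) the extension criterion, (ii) the counting of extensions, and (iii) the exhaustive classification.

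For (i), the forward direction is immediate: if $\tilde r$ extends $r$ to $G$, then for $x\in G\setminus H$ the operator $\tilde r(x)$ satisfies $\tilde r(x)r(h)\tilde r(x)^{-1}=\tilde r(xhx^{-1})=r(xhx^{-1})=r^x(h)$, exhibiting the required intertwiner $r\cong r^x$. For the reverse direction, suppose $T\colon V\to V$ is a nonzero intertwiner with $T\,r(h)\,T^{-1}=r(xhx^{-1})$. Applying the relation twice gives $T^2 r(h) T^{-2}=r(x^2 h x^{-2})=r(x^2)r(h)r(x^2)^{-1}$, so $r(x^2)^{-1}T^2$ commutes with all of $r(H)$. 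Since $r$ is irreducible, Schur's lemma forces $T^2=\lambda\,r(x^2)$ for some scalar $\lambda\in\C^\times$. Replacing $T$ by $\mu T$ with $\mu^2=\lambda^{-1}$ we may assume $T^2=r(x^2)$. Setting $\tilde r(h)=r(h)$ and $\tilde r(xh)=T\,r(h)$ for $h\in H$ and verifying the multiplication rule (which reduces to $T^2=r(x^2)$ and the intertwining relation) produces the desired extension.

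For (ii), the only freedom in the construction was the sign in $\mu=\pm\lambda^{-1/2}$, which replaces $T$ by $-T$ and hence $\tilde r$ by $\tilde r\otimes\chi$, where $\chi$ is the unique nontrivial character of $G/H\cong\Z/2$. The two resulting extensions are nonisomorphic because any isomorphism would restrict to an automorphism of $r$ intertwining the two choices of $\tilde r(x)$, contradicting Schur's lemma applied on $H$. Conversely any extension of $r$ yields such a $T$ up to the same sign ambiguity, so there are exactly two extensions.

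For (iii), let $\sigma$ be an arbitrary irreducible representation of $G$. By Clifford's theorem, $\sigma|_H$ is a sum of $H$-conjugates of a single irreducible $\rho$, and since $[G:H]=2$ there are at most two such conjugates, namely $\rho$ and $\rho^x$. Two cases arise. If $\rho\cong\rho^x$, then $\sigma|_H=\rho$ is irreducible and $\sigma$ is one of the two extensions produced above. If $\rho\not\cong\rho^x$, then $\sigma|_H=\rho\oplus\rho^x$; by Frobenius reciprocity $\sigma$ is a summand of $\Ind_H^G\rho$, and dimension count forces $\sigma\cong\Ind_H^G\rho$, which is therefore irreducible. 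This exhausts all irreducibles of $G$ and matches the description in the statement.

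The only step requiring care is the Schur-lemma rescaling in (i): one must verify that the rescaled $T$ actually defines a representation, i.e.\ that $\tilde r(xh_1\cdot xh_2)=\tilde r(xh_1)\tilde r(xh_2)$, which is the genuine content of the argument. Everything else is bookkeeping, and the Clifford/Mackey classification in (iii) is then formal.
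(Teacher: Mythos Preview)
Your proof is correct and follows essentially the same Clifford-theoretic route as the paper: build an extension from an intertwiner, count the two extensions, and then classify all irreducibles via Frobenius reciprocity and a dimension count. You are in fact more careful in one spot---where the paper simply declares ``we can clearly set $r_0(x)=U$'', you supply the Schur-lemma rescaling $T^2=r(x^2)$ that actually makes this a homomorphism---and for the bound on the number of extensions you argue directly via the sign ambiguity rather than invoking Frobenius reciprocity as the paper does, but these are cosmetic differences within the same argument.
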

\begin{proof}Suppose $r\simeq r^x$.  This means that there is some endomorphism $U$ with $r^x(h)=Ur(h)U^{-1}$ for each $h\in H$; we can clearly set $r_0(x)=U$ and $r_0(h)=r(h)$, giving a representation of $G$.  Conversely, if $r$ extends to $r_0$, $r_0(x)r(h)r_0(x)^{-1}=r^x(h)$ shows that $r\simeq r^x$.  If these two conditions hold, Frobenius Reciprocity shows that there can be at most two distinct representations that restrict to $r$ on $H$, and we have found two already, $r_0$ and $r_0\tensor\chi$.

Now given any irreducible representation $s$ of $G$, either $s|_H$ is irreducible or not.  If so we're in the case above; if not, say $s_1$ is a subrepresentation of $s|_H$.  Then by the universal property of $\Ind$, since we have an $H$-equivariant map from $s_1$ into $s$, there must be a $G$-equivariant map $\Ind^G_H s_1\rightarrow s$; by Schur's lemma and counting dimensions, we must have $\Ind^G_H s_1=s$.\end{proof}

We apply this theorem with $G=ST_A=\langle \SU(2)\times\SU(2),J\rangle$ and $H=\SU(2)\times\SU(2)$.  Given the representation $r_{k, l}$ we choose $x=J$ and find that \[J(A, B)J^{-1}=(-J_0BJ_0, -J_0AJ_0)=(J_0BJ_0^{-1},J_0AJ_0^{-1})=(J_0, J_0)(B, A)(J_0, J_0)^{-1}\]where $J_0=\begin{pmatrix}0&1\\ -1&0\end{pmatrix}$ so that $J=\begin{pmatrix}0&J_0\\-J_0&0\end{pmatrix}$.  Because $\begin{pmatrix}J_0&0\\ 0&J_0\end{pmatrix}\in\SU(2)\times\SU(2)$, we find that $r^J_{k, l}\simeq r_{l, k}$.  The representations $r_{k, l}$ are nonisomorphic for distinct pairs $(k, l)$ so the representation $r_{k, l}$ extends only for $k=l$, say to $r^1_k$ and $r^2_k$; otherwise we obtain only the induced representation, which makes no distinction between $(k, l)$ and $(l, k)$.  Hence all irreducible representations of $ST_A$ are \[r^1_k\text{ and }r^2_k\text{ for }k\geq 0\text{ and }\Ind_{ST^0_A}^{ST_A}r_{k, l}\text{ for }k>l\geq 0.\]

As discussed above and by \cite[Proposition~2.17]{MR2982436}, because $ST_A$ has two components, the field $L$ over which all endomorphisms are defined, $\End_{\overline{\Q}}(A)=\End_L(A)$, is a quadratic extension of $F$, and $ST_{A_L}$, the Sato-Tate group of $A$ as a variety over $L$, is just the identity connected component $ST^0_A=\SU(2)\times\SU(2)$ of $ST_A$.

\begin{thm}If $L$ is either a totally real field or a CM field, then Conjecture 2.2 is true for $A$ over $F$.\end{thm}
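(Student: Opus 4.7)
The plan is to verify, for every nontrivial irreducible representation $r$ of $ST_A$, the invertibility hypothesis of Theorem 2.3. The inventory above splits these into the extensions $r_k^1,r_k^2$ of $r_{k,k}$ for $k\geq 0$ and the inductions $\Ind_{ST_A^0}^{ST_A} r_{k,l}$ for $k>l\geq 0$. Using the factorization $L(s,\Ind_{G_L}^{G_F}\rho)=L(s,\rho)$ over $L$ together with the identity $\Ind_{ST_A^0}^{ST_A} r_{k,k}\simeq r_k^1\oplus r_k^2$, every such question reduces to showing that the $L$-function over $L$ attached to $\Sym^k\rho_1\otimes\Sym^l\rho_2$ is invertible on $\text{Re}(s)\geq 1$, with a simple pole at $s=1$ allowed only when the representation has a trivial constituent (a contribution matched exactly by the $\zeta^S(s)$-factor coming from whichever of the $r_k^i$ is the trivial one).

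First I would produce the relevant $2$-dimensional compatible systems: since $A_L$ is of $\GL_2$-type with real quadratic coefficient field $K$, Theorem 3.2 gives, for each embedding $\lambda\colon K\hookrightarrow\overline{\Q_\ell}$, a $2$-dimensional $\lambda$-adic representation of $G_L$. The two embeddings assemble into a pair of weakly compatible systems $\mathcal{R}_1,\mathcal{R}_2$ with Hodge--Tate weights $\{0,1\}$ and cyclotomic determinant, conjugate under $\Gal(L/F)$. Strong irreducibility follows from the equality $\End_{\overline{F}}(A)\otimes\Q=K$: otherwise a finite-index subgroup of $G_L$ would preserve a line in some $\rho_i$, forcing extra endomorphisms of $A$ over a finite extension of $L$ and contradicting maximality of $K$.

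Next I would invoke potential automorphy. When $L$ is CM, Theorem 2.5 produces, for any pair $(k,l)$, a finite CM extension $L'/L$ with $L'/\Q$ Galois over which both $\Sym^k\mathcal{R}_1|_{G_{L'}}$ and $\Sym^l\mathcal{R}_2|_{G_{L'}}$ are automorphic (enlarging $L'$ so that the two extensions produced separately by the theorem can be taken to coincide), as cuspidal representations $\Pi_1,\Pi_2$. When $L$ is totally real I instead use \cite[Theorem~5.4.1]{MR3152941} exactly as in \cite{MR3660222}. Over $L'$ the Rankin--Selberg $L$-function $L(s,\Pi_1\boxtimes\Pi_2)$ is then invertible on $\text{Re}(s)\geq 1$ outside a possible simple pole at $s=1$ occurring precisely when $\Pi_1\simeq\Pi_2^\vee$.

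The remaining step, and the main obstacle, is descending from $L'$ back to $L$: I would apply Brauer's theorem to $\Gal(L'/L)$ to express the trivial character as a virtual sum of characters induced from characters of soluble subgroups, yielding a factorization of the $L$-function over $L$ of $\Sym^k\rho_1\otimes\Sym^l\rho_2$ as a product and quotient of automorphic $L$-functions on intermediate soluble sub-extensions, each identified via solvable cyclic base change for the relevant $\GL_n$. The bookkeeping here --- in particular, arranging the Brauer decomposition so that only pieces amenable to cyclic base change appear, and matching the possible pole at $s=1$ with the trivial-constituent contribution allowed by Theorem 2.3 --- follows the model of \cite{MR3660222}, the sole genuinely new ingredient being the substitution of Theorem 2.5 for \cite[Theorem~5.4.1]{MR3152941} to cover the case when $L$ is CM rather than totally real.
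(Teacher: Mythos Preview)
Your treatment of the induced representations $\Ind_{ST_A^0}^{ST_A} r_{k,l}$ for $k>l$ matches the paper's argument. The gap is in your handling of the two extensions $r_k^1,r_k^2$ for $k\ge 1$.

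Your reduction via $\Ind_{ST_A^0}^{ST_A} r_{k,k}\simeq r_k^1\oplus r_k^2$ and Artin formalism shows only that the \emph{product} $L^S(r_k^1,s)\,L^S(r_k^2,s)$ equals the $L$-function over $L$ of $\Sym^k\rho_1\otimes\Sym^k\rho_2$, and hence is invertible on $\mathrm{Re}(s)\ge 1$. Theorem 2.3, however, demands that each $L^S(r_k^i,s)$ individually be holomorphic and nonzero there: nothing in your argument rules out a zero of one factor cancelling a pole of the other at $s=1$. Descending via Brauer only to $L$, rather than all the way to $F$, is exactly what discards the information needed to separate the two factors, because over $L$ the representations $r_k^1$ and $r_k^2$ become isomorphic.

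The paper repairs this by running the Brauer argument over $\Gal(L'/F)$ instead of $\Gal(L'/L)$, so that intermediate fields $E$ with $L\not\subseteq E$ appear. For such $E$ (which are totally real), the restriction $r_k^i|_{G_E}$ is no longer the same for $i=1,2$; the paper identifies each $L(r_k^i|_{G_E},s)$ with an Asai $L$-function attached to the quadratic extension $EL/E$ and the cuspidal representation corresponding to $\Sym^k\rho_{A,\lambda}|_{G_{EL}}$. The crucial extra input, absent from your sketch, is the nonvanishing of Asai $L$-functions on $\mathrm{Re}(s)\ge 1$ from \cite{MR3366033}: once each Asai factor is known to be nonzero, holomorphy of their product (which is a Rankin--Selberg $L$-function over $EL$) forces holomorphy of each, and then Brauer over $\Gal(L'/F)$ assembles these into invertibility of each $L^S(r_k^i,s)$ over $F$.
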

\begin{proof}If $L$ is a totally real field, this was proven already in \cite[Proposition 24]{MR3660222}, so suppose $L$ is a CM field; we proceed in a similar fashion.  We must show that for each representation given above, the $L$-function in Theorem 2.3 is invertible at $1$.  Let us first look at a representation $\Ind_{ST^0_A}^{ST_A}r_{k, l}$.  It follows from a theorem of Artin that if $s'(v')$ denotes the normalized image of Frobenius for prime $v'$ in $G_L$, then\[L^S(\Ind_{ST^0_A}^{ST_A}r_{k, l}, s)=\prod_{v\not\in S}\frac{1}{\det(1-\Ind_{ST^0_A}^{ST_A}r_{k, l}(s(v))q_v^{-s})}=\prod_{v'\not\in S'}\frac{1}{\det(1-r_{k, l}(s'(v'))q_{v'}^{-s})}=L^{S'}(r_{k, l}, s)\]so that we may prove invertibility of this new $L$-function.

From here, we cease mention of $F$ and work solely with $L$.  Let us extend $r_{k,l}$ from a representation of $\SU(2)\times\SU(2)$ to a representation $R_{k, l}$ of $G(L)$, the algebraic group coming from $G_L$ instead of $G_F$; we naturally do this by restricting $\Sym^k(St)\tensor\Sym^l(St)$ from $\GL(2)\times \GL(2)$ to $G(L)$.  In fact, we get a representation of $G^{\text{Zar}}_\ell(L)\subseteq \GL_2(\overline{\Q_\ell})\times \GL_2(\overline{\Q_\ell})$, which we can also call $R_{k, l}$.  Thus finally we get a representation of $G_L$, namely $R_{k, l}\circ \rho_{A_L, \ell}$.  Looking at where $\Frob_{v'}$ is sent, the $L$-function is\[L^{S'}(r_{k, l}, s)=L^{S'}(R_{k, l}\circ \rho_{A_L, \ell}, s+(k+l)/2)=\prod_{v'\not\in S'}\det(1-R_{k, l}\circ\rho_{A_L, \ell}(\Frob_{v'})q_{v'}^{-(s+(k+l)/2)})^{-1}.\]As discussed before the statement of Theorem 3.2, the two embeddings $\lambda_1, \lambda_2$ of $K=\End^0_L(A)$ into $\overline{\Q_\ell}$ give the decomposition of $\rho_{A_L, \ell}$ into $\rho_{A_L, \lambda_1}\oplus\rho_{A_L, \lambda_2}$, and these give the further decomposition of the $L$-function into \[L^{S'}(\Sym^k(\rho_{A_L,\lambda_1})\otimes \Sym^l(\rho_{A_L, \lambda_2}),s+(k+l)/2);\] this is finally what we must prove to be holomorphic and invertible.

We look at the weakly compatible system $(\rho_{A_L, \lambda})_{\lambda}$.  The Hodge-Tate weights of these are all $0$ and $1$.  Since the image of $\rho_{A_L, \lambda}$ is open in $G^{\text{Zar}}_{\lambda}=\GL_2(\overline{\Q_p})$, there is no subgroup of $G_L$ for which $\rho_{A_L, \lambda}$ becomes reducible.  So we may apply Theorem 2.5 to get some CM field $L'_m$ over which the compatible system $(\Sym^m(\rho_{A_L, \lambda}))_{\lambda}$ is automorphic.

The theory of cyclic base change in \cite{MR1007299} shows that $(\Sym^m(\rho_{A_L, \lambda}))_\lambda$ is automorphic over all $E$ where $L'_m/E$ is cyclic, and hence solvable; we can apply the Rankin-Selberg method as in the proof of \cite[Theorem~5.3]{MR2641185} to the field $L'=L'_kL'_l$, over which the two compatible systems $(\Sym^k(\rho_{A_L, \lambda}))_\lambda$ and $(\Sym^l(\rho_{A_L, \lambda}))_\lambda$ are both automorphic, to show that \[L^{S'}(\Sym^k(\rho_{A_L,\lambda_1}|_{G_E})\otimes \Sym^l(\rho_{A_L, \lambda_2}|_{G_E}),s+(k+l)/2)\] is invertible along the central line, assuming that $\Sym^k(\rho_{A_L,\lambda_1}|_{G_E})$ and $\Sym^l(\rho_{A_L, \lambda_2}|_{G_E})$ are not dual.  But $k\neq l$, so a dimension count shows that they cannot be dual.  So \[L^{S'}(\Sym^k(\rho_{A_L,\lambda_1}|_{G_E})\otimes \Sym^l(\rho_{A_L, \lambda_2}|_{G_E}),s+(k+l)/2)\] is invertible for all $E$ solvable subfields of $L'$; Brauer's theorem applies to the Galois groups $\Gal(L'/E)\subseteq\Gal(L'/L)$, and we get that the $L$-function for the representation over $L$ is an integer power combination of those over $E$, and therefore is also invertible.

Next, we look at the representations $r^i_k$ for $i=1, 2$ and $k\geq 1$.  Recall that they are the two distinct extensions of $\Sym^k\tensor\Sym^k$ to representations of $N(\SU(2)\times\SU(2))=\langle\SU(2)\times\SU(2), J\rangle$.  As before, let us extend $r^i_k$ to an algebraic representation of $G\subseteq\langle\GL(2)\times\GL(2), J\rangle$ by restricting $\Sym^k\tensor\Sym^k$ and leaving the image of $J$ alone.  This again gives us a representation $R_k^i$ of $G^{\text{Zar}}_{\ell}$, and then composing with $\rho_{A, \ell}$ finally gives us a Galois representation.  The $L$-function attached to $r^i_k$ is\[L^S(r^i_k, s)=\prod_{v\not\in S}\frac{1}{\det(1-r^i_k(s(v))q_v^{-s})}=\prod_{v\not\in S}\frac{1}{\det(1-R^i_k\circ\rho_{A, \ell}(\Frob_v)q_v^{-(s+k)})}\]This $L$-function being invertible follows if the $L$-functions for $R^i_k\circ\rho_{A, \ell}|_{G_E}$ for $L'/E$ solvable are, where $L'=L'_k$ is the field from Theorem 2.5.  For a given $E$, either $L\subseteq E$ or $L\not\subseteq E$.  If $L\subseteq E$, then $R^i_k\circ\rho_{A, \ell}|_{G_E}=\Sym^k(\rho_{A, \lambda_1}|_{G_E})\tensor\Sym^k(\rho_{A, \lambda_2}|_{G_E})$ as before.  Then we can apply Rankin-Selberg, except dimension count doesn't work.  We want\[L(\Sym^k(\rho_{A, \lambda_1}|_{G_E})\tensor\Sym^k(\rho_{A, \lambda_2}|_{G_E}), s+k)=L(\Sym^k(\rho_{A, \lambda_1}|_{G_E})\tensor\Sym^k(\rho_{A, \lambda_2}|_{G_E})\tensor\chi_{\ell}^{-k}, s)\] to be invertible, so we require that $\Sym^k(\rho_{A, \lambda_1}|_{G_E})$ and $\Sym^k(\rho_{A, \lambda_2}|_{G_E})\tensor\chi_{\ell}^{-k}$ not be dual.  But $\rho_{A, \lambda_1}|_{G_E}$ is essentially self-dual via the Weil pairing; in fact, $\rho_{A, \lambda_1}|_{G_E}\simeq \rho_{A, \lambda_1}^{\vee}|_{G_E}\tensor\chi_{\ell}$.  Therefore, we require that $\Sym^k(\rho_{A, \lambda_2}|_{G_E})\tensor\chi_{\ell}^{-k}$ not be isomorphic to $\Sym^k(\rho_{A, \lambda_1}|_{G_E})\tensor\chi_{\ell}^{-k}$.  But if this happened, then $\rho_{A, \lambda_2}|_{G_{E'}}\simeq\rho_{A, \lambda_1}|_{G_{E'}}$ for some finite extension $E'$.  This contradicts the fact that $\End_{\overline{\Q}}(A)=K$ by Faltings' theorem, so we're done in this case.

Otherwise, $L\not\subseteq E$, and $E$ is therefore a totally real subfield of $L'$.  But if $L=F(\sqrt{\alpha})$, then let $E'=E(\sqrt{\alpha})$ to get a degree $2$ CM extension containing $L$.  $(\Sym^k(\rho_{A, \lambda}|_{G_{E'}}))_{\lambda}$ is cuspidal automorphic as before, and the $L$-function of the $G_E$ representation is just the Asai $L$-function of the associated automorphic representation of this system, in the terminology of \cite{MR3366033}.  By \cite[Theorem~4.3]{MR3366033}, this Asai $L$-function is nonzero and holomorphic on the right half-plane, if the automorphic representation is not self-dual.  In fact, it's always nonzero, so it's holomorphic for both $r^1_k$ and $r^2_k$ if and only if the product of the two Asai $L$-functions is holomorphic.  But the product is\[L(r^1_k|_{G_E}, s)L(r^2_k|_{G_E}, s)=L(\Sym^k(\rho_{A, \lambda_1}|_{G_{E'}})\tensor\Sym^k(\rho_{A, \lambda_2}|_{G_{E'}}),s+k),\] which as before is holomorphic.  So each of these two Asai $L$-functions is holomorphic.

Finally, we look at the nontrivial finite representation $r^2_0$.  This takes $J$ to $-1$ and the connected component of the identity $ST^0_A$ to $1$.  But the $L$-function is \[\prod_{v\not\in S}\frac{1}{1-\chi(\Frob_v)q_v^{-s}},\] where $\chi$ is the Hecke character coming from $\Gal(L/F)$, and this is hence its $L$-function.  It's thus clear that this $L$-function is invertible.  So we've shown that, for every representation, the $L$-function is invertible along the line $\Re s=1$, so we're done.
\end{proof}
\begin{rmk}Notice that this proves the Sato-Tate conjecture in this case when $F=\mathbb{Q}$ because all quadratic extensions are either totally real or CM.\end{rmk}
\subsection{$\textbf{E}[D_{2n}]$, $n=2, 3, 4, 6$}
We look now at the Sato-Tate groups $ST_A=\left\langle\begin{pmatrix}B&\, \\ \,&\overline{B}\end{pmatrix}_{B\in\SU(2)},E_n:=\begin{pmatrix}e^{\frac{\pi i}{n}}\Id_2&\,\\ \,&e^{-\frac{\pi i}{n}}\Id_2\end{pmatrix},J\right\rangle$, with identity component $ST^0_A$ the embedded copy of $\SU(2)$ and component group $D_{2n}$.  These arise from abelian varieties $A$ whose endomorphism ring $\End^0_M(A)$ is a quaternion algebra for a large enough field extension $M/F$.  Either $A$ is potentially the sum of two elliptic curves without CM whose $\ell$-adic representations are twists of each other by a finite-order character, or $A$ is simple with quaternionic multiplication.  If we view $A$ as defined over $L$, where $G_L$ is the index-$2$ subgroup of the Galois group $G_F$ corresponding to the cyclic subgroup of the component group $D_{2n}$ under the correspondence given in \cite[Theorem 2.17]{MR2982436}, the endomorphism ring is not yet a quaternion algebra.  It is, however, a quadratic field $K$, as proven in \cite[Theorem~4.7]{MR2982436}; we note that while the statement in \cite{MR2982436} is constructed for the direct sum of elliptic curves case, there is no use of this in the proof, so we may apply it here as well.

To prove Conjecture 2.2 in this case, our strategy is to decompose the representation $\rho_{A, \ell}$ into a tensor $s\tensor \delta$ where $\delta$ is a finite-image dihedral representation and $s$ is a two-dimensional representation.  We do this by manually constructing a $2$-cocycle in a certain cohomology group that obstructs a representation lift from $G_L$ to $G_F$, then use the fact that the cohomology is $0$ to obtain a coboundary description, which allows us to lift.  Then we check that $s$ acts solely on the identity component and $\delta$ acts on the component group times $\pm\Id$, and finally use Rankin-Selberg and Theorem $2.5$ again.

As in the previous case, we may decompose the representation $\rho_{A, \ell}|_{G_L}$ into two $2$-dimensional pieces $\rho_{A, \lambda}$ and $\rho_{A, \overline{\lambda}}$ via the two embeddings of $K$ into $\overline{\Q_\ell}$, and as in the previous case, the theorem of Ribet says that $(\rho_{A, \lambda})_{\lambda\in S'}$ is a compatible system of representations.  But unlike the previous case, we get the isomorphism $\rho_{A, \lambda}\tensor\epsilon\simeq\rho_{A, \overline{\lambda}}$ for some finite-image character $\epsilon$.  We notice that $\Ind_{G_L}^{G_K}\rho_{A, \lambda}=\rho_{A, \ell}$ by Frobenius reciprocity, and so $\rho_{A, \ell}|_{G_L}=\rho_{A, \lambda}\oplus\rho_{A, \lambda}^g$ for $g\in G_F\backslash G_L$; therefore, $\rho_{A, \lambda}\tensor\epsilon\simeq\rho_{A, \overline{\lambda}}\simeq\rho_{A, \lambda}^g$.  (Notationally, from here we will assume that any group element $g$ with or without subscript is in $G_F\backslash G_L$ and any group element $h$ is in $G_L$, so as to repeatedly omit this statement.)
 
Because of \cite[Proposition~2.17]{MR2982436}, we know that if $M$ is the smallest field with $\End^0_M(A)$ being the full quaternion algebra, then $\Gal(M/F)=D_{2n}$, and that $\Gal(M/L)=C_n$.  Because \[(\rho_{A, \lambda}\oplus(\rho_{A, \lambda}\tensor\epsilon))|_{G_M}=\rho_{A_M, \lambda}\oplus(\rho_{A_M, \lambda}\tensor\epsilon|_{G_M})\]has a four-dimensional real endomorphism ring only if $\epsilon|_{G_M}$ is trivial, we must have $\epsilon$ being a character of $\Gal(M/L)$.  In particular, $\epsilon(h)=1$ if $h\in G_M$.  But because of the structure of $D_{2n}$, we know that $g\in G_F\backslash G_L$ has $g^2\in G_M$.  So $\epsilon(g^2)=1$.

In addition, we know \[\rho^g_{A, \lambda}\simeq \rho_{A, \lambda}\tensor\epsilon,\text{ so }\rho_{A, \lambda}\simeq \rho^g_{A, \lambda}\tensor\epsilon^g\simeq\rho_{A, \lambda}\tensor\epsilon\tensor\epsilon^g\]and hence we conclude that $\epsilon(ghg^{-1})\epsilon(h)=1$.

We let $c$ be such that \[c(h_1, h_2)=c(g_1, h_2)=1, c(h_1, g_2)=c(g'h_1, g_2)=\epsilon(h_1)\]for all $g_1, g_2, h_1, h_2$, and fixed $g'\in G_F\backslash G_L$.  Then the above statements are enough to exhaustively prove that $c$ is a cocycle in $H^2(G_F, \overline{K_{\lambda}}^{\times})$ with $\overline{K_{\lambda}}^{\times}$ having the trivial action and discrete topology.  But it's a theorem of Tate that $H^2(G_F, \overline{K_{\lambda}}^{\times})$ is trivial, so this cocycle must be a coboundary.  That means there is a continuous (i.e. finite-image) cochain $\gamma: G_F\rightarrow \overline{K_{\lambda}}^{\times}$ with $c(g_1, g_2)=\frac{\gamma(g_1)\gamma(g_2)}{\gamma(g_1g_2)}$, and so on through all combinations of $g_i$ and $h_i$.

We can check via the above the following equations: \begin{align*}\gamma(\Id)&=1\\ \, \\ \gamma(g)\gamma(g^{-1})&=c(g, g^{-1})=\epsilon(g'^{-1}g)\\ \, \\ \gamma(g)\gamma(hg^{-1})&=\gamma(ghg^{-1})c(g, hg^{-1})=\gamma(ghg^{-1})\epsilon(g'^{-1}g)=\gamma(ghg^{-1})\gamma(g)\gamma(g^{-1})\\ \, \\ \gamma(h)\gamma(g^{-1})&=\gamma(hg^{-1})c(h, g^{-1})=\gamma(hg^{-1})\epsilon(h)=\gamma(ghg^{-1})\gamma(g^{-1})\epsilon(h)\end{align*}so that $\gamma(h)=\gamma(ghg^{-1})\epsilon(h)$ for every pair $(g, h)$.  Further, $\gamma$ is a character of $G_L$; from here we only remember the domain of $\gamma$ being $G_L$.  Therefore, if we let $s_{A, \lambda}=r_{A, \lambda}\tensor\gamma$, then \[s_{A, \lambda}^g=r^g_{A, \lambda}\tensor\gamma^g\simeq r_{A, \lambda}\tensor\epsilon\tensor\gamma^g\simeq r_{A, \lambda}\tensor\gamma=s_{A, \lambda}\]so that we may extend $s_{A, \lambda}$ to be a representation of $G_F$, by Theorem $3.3$, with basis $\{s_1, s_2\}$.  And there is a clear $G_L$-equivariant map $r_{A, \lambda}\rightarrow s_{A, \lambda}\tensor\Ind_{G_L}^{G_F}\gamma^{-1}$ given by sending $v$ to $v\tensor 1$;  therefore, there is a $G_F$-equivariant map $r_{A, \ell}=\Ind_{G_L}^{G_F}r_{A, \lambda}\rightarrow s_{A, \lambda}\tensor\Ind_{G_L}^{G_F}\gamma^{-1}$.  By dimension count, they must be isomorphic.  Therefore, we are able to write $r_{A, \ell}$ as $s_{A, \lambda}\tensor\delta$, where $\delta$ is finite-image with vector space having basis $\{v_1, v_2\}$, and in fact has image isomorphic to a dihedral group.  Notice that the way we devised $\gamma$, we didn't use anything about $\lambda$, and $\epsilon$ is independent of $\lambda$ by Theorem 3.2; so $\gamma$ is independent of $\lambda$ as is $V$, so since $(r_{A, \lambda})_{\lambda}$ is a weakly compatible system, so too is $(s_{A, \lambda})_{\lambda}$.

\begin{thm}If $F$ is a totally real field and $A$ is an abelian variety defined over $F$ which has Galois type $\textbf{E}[D_n]$ for $n=2, 3, 4, 6$, then the Sato-Tate conjecture holds for $A$.\end{thm}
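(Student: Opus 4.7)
The plan is to adapt the strategy of Theorem 3.4, exploiting the decomposition $\rho_{A,\ell}\simeq s_{A,\lambda}\otimes\delta$ just established.

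First I would enumerate the irreducible representations of $ST_A$. A direct computation shows that both $E_n$ and $J$ centralize the block-diagonal copy of $SU(2)$: for $E_n$ this is immediate since it is block-scalar, and for $J$ one checks using $-J_0\overline{B}J_0=B$ for $B\in SU(2)$. Hence $ST_A$ is the quotient of $SU(2)\times\widetilde{D}$, where $\widetilde{D}=\langle E_n,J\rangle$ is the dihedral group of order $4n$, by the central subgroup $\langle(-\Id,E_n^n)\rangle$. By Clifford theory, every $\Sym^k$ extends and the irreducible representations of $ST_A$ are exactly the tensor products $\Sym^k(St)\otimes\tau$, where $\tau$ is an irreducible representation of $\widetilde{D}$ satisfying the central-character condition $\tau(E_n^n)=(-1)^k\Id$.

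Next, each such $R=\Sym^k\otimes\tau$ extends to an algebraic representation of $G$, and using $\rho_{A,\ell}\simeq s_{A,\lambda}\otimes\delta$ the composition $R\circ\rho_{A,\ell}$ becomes $\Sym^k(s_{A,\lambda})\otimes\tau_A$, where $\tau_A$ is the finite-image Artin representation of $G_F$ obtained by applying $\tau$ to the dihedral Galois representation through which $\delta$ factors. Up to the shift by $k$ familiar from Theorem 3.4, the $L$-function $L^S(R,s)$ is then the Rankin-Selberg $L$-function $L(\Sym^k(s_{A,\lambda})\otimes\tau_A,s)$. For $k=0$ this reduces to an Artin $L$-function in $\tau_A$, handled by Brauer-Langlands. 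For $k\geq 1$, I would apply Theorem 2.5 to the compatible system $(s_{A,\lambda})_\lambda$, which has Hodge-Tate weights $\{0,1\}$ (being a finite-order twist of $(\rho_{A,\lambda})_\lambda$) and is strongly irreducible (its restriction to $G_L$ has open image in $\GL_2$, by Serre's theorem applied to the abelian surface). After base-changing from the totally real $F$ to a CM extension $F'$, Theorem 2.5 provides a CM Galois extension $F_k/\Q$ containing $F'$ over which $\Sym^k(s_{A,\lambda})|_{G_{F_k}}$ is cuspidal automorphic. Arthur-Clozel cyclic base change, Rankin-Selberg of $\Sym^k(s_{A,\lambda})|_{G_E}$ against $\tau_A|_{G_E}$ for intermediate solvable $E$, and Brauer's theorem applied to $\Gal(F_k/F)$, then together descend invertibility to $F$.

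The main obstacle is the non-duality check needed to rule out a pole from Rankin-Selberg. For $k\geq 1$, $\Sym^k(s_{A,\lambda})|_{G_E}$ has open image in $\GL_{k+1}$ while $\tau_A^{\vee}$ twisted by any finite-order character has finite image, so no isomorphism after a central shift is possible; non-duality is automatic, exactly as in Theorem 3.4. When $\tau$ is two-dimensional, one can alternatively write $\tau_A=\Ind_{G_L}^{G_F}\chi$ for a finite-order character $\chi$ of $G_L$, and use Frobenius reciprocity to reduce the Rankin-Selberg $L$-function to an $L$-function of $\Sym^k(s_{A,\lambda})|_{G_L}$ twisted by a finite-order Hecke character of $L$, which falls into the same regime and is handled identically.
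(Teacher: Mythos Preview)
Your proposal is correct and follows essentially the same route as the paper: identify $ST_A$ as a central quotient of $\SU(2)\times D_{4n}$, write each irreducible as $\Sym^k\otimes\tau$, use the decomposition $\rho_{A,\ell}\simeq s_{A,\lambda}\otimes\delta$ to convert the $L$-function into a Rankin--Selberg product of $\Sym^k(s_{A,\lambda})$ against the Artin representation $\tau\circ\delta$, then apply potential automorphy plus Arthur--Clozel and Brauer. Two small differences are worth noting. First, the paper observes that since $F$ is already totally real, the older theorem \cite[Theorem~5.4.1]{MR3152941} suffices here, so your detour through a CM base-change to invoke Theorem~2.5 is unnecessary (though harmless). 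Second, the paper spends a paragraph with an explicit change of symplectic basis and a centralizer computation to pin down that the image of $\delta$ lands exactly in the copy of $D_{4n}=\langle E_n,J\rangle$ inside $ST_A$, so that ``apply $\tau$ to $\delta$'' really makes sense and the $L$-function identity holds on the nose; you assert this identification, which is plausible from the component-group count but is the one step in your sketch that would benefit from being written out. Your invocation of ``Serre's theorem'' for strong irreducibility is a slight misattribution---the relevant open-image input here is for $\GL_2$-type abelian varieties (Ribet, or the setup preceding Theorem~3.2)---but the conclusion is correct.
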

\begin{proof}As before, we must show that for each representation $r$ of the Sato-Tate group, the $L$-function $\prod_{v\not\in S}\det(1-r(s(v))q_v^{-s})^{-1}$ is holomorphic and invertible for $\Re s\geq 1$ where $s(v)$ is the conjugacy class given by dividing the image of $\Frob_v$ by $q_v^{1/2}$.  The Sato-Tate group $ST_A$ is given by $\SU(2)\times D_{4n}/\langle(-\Id_2, E_n^n)\rangle$, so that any representation of $ST_A$ is given by a representation of $\SU(2)$ tensored with a representation of $D_{4n}$ whose signs agree on their centers.  Of course the irreducible representations of $\SU(2)$ are $\Sym^k(St)$ and there are $4$ one-dimensional and $n-1$ two-dimensional representations of $D_{4n}$.

Our goal now is to describe where $s_{A, \lambda}$ and $\delta$ send $\Frob_v$ inside $ST_A$.  As written before, the Sato-Tate group is represented as the matrices in $\left\langle\begin{pmatrix}B&\, \\ \,&\overline{B}\end{pmatrix}_{B\in\SU(2)}, \begin{pmatrix}e^{\frac{\pi i}{n}}\Id_2&\,\\ \,&e^{-\frac{\pi i}{n}}\Id_2\end{pmatrix},J\right\rangle$.  These are inside $\Sp(4)$ where the alternating form is $\left(\begin{smallmatrix}\,&\,&1&\, \\ \,&\,&\,&1\\-1&\,&\,&\, \\ \,&-1&\,&\,\end{smallmatrix}\right)$.  However, we instead view it with the alternating form $\left(\begin{smallmatrix}\,&\,&\,&-1 \\ \,&\,&1&\,\\ \,&-1&\,&\, \\1&\,&\,&\,\end{smallmatrix}\right)$.  That is, we conjugate the Sato-Tate group by $\left(\begin{smallmatrix}1&\,&\,&\, \\ \,&1&\,&\,\\ \,&\,&\,&1 \\ \,&\,&-1&\,\end{smallmatrix}\right)$ to get the new group\[\left\langle\begin{pmatrix}B&\, \\ \,&B\end{pmatrix}_{B\in\SU(2)},\begin{pmatrix}e^{\frac{\pi i}{n}}\Id_2&\,\\ \,&e^{-\frac{\pi i}{n}}\Id_2\end{pmatrix},\begin{pmatrix}\,&\Id_2\\ \Id_2&\,\end{pmatrix}\right\rangle.\]Writing it in this form, because the Zariski closure of $\SU(2)$ is $\SL(2)$, we know that $G^1$ must contain all matrices $\left(\begin{smallmatrix}A&\,\\ \,&A\end{smallmatrix}\right)$ where $A\in\SL(2)$.  But as above, the theorem of Deligne says that the scalar multiples of the identity must be in the Zariski closure of the image of $r_{A, \ell}$, so that means that $G$ must contain all matrices of the form above, where $A$ is now in $\GL(2)$.  Now $G$ is the image under $\iota$ of $G_{\ell}^{\text{Zar}}$, the Zariski closure of the image of $r_{A, \ell}$, which is the Kronecker product of the Zariski closure of the image of $s_{A, \lambda}$ with the image of $\delta$.  If we look at the closure of $\rho_{A, \ell}(\ker \delta)$, this is a finite index subgroup of $G_{\ell}^{\text{Zar}}$.  Because the connected component of the identity $G_{\ell}^{\text{Zar},0}$ is isomorphic to $\GL(2)$ and thus is Zariski irreducible, the closure of $r_{A, \ell}(\ker\delta)$ cannot be smaller than this.

But also it cannot be larger than this: it is contained in the centralizer of a $4$-dimensional vector space inside $M_4(\overline{\Q_{\ell}})$, namely $\left(\begin{smallmatrix}a\cdot\Id&b\cdot\Id\\ c\cdot\Id&d\cdot\Id\end{smallmatrix}\right)$ in the basis $s_1\tensor v_1, s_2\tensor v_1, s_1\tensor v_2, s_2\tensor v_2$, but $G_{\ell}^{\text{Zar},0}$ is already such a centralizer: it centralizes $\left(\begin{smallmatrix}a\cdot\Id&b\cdot\Id\\ c\cdot\Id&d\cdot\Id\end{smallmatrix}\right)$ in the usual basis.  Therefore the closure of $r_{A, \ell}(\ker\delta)$ is equal to this connected component $\left\{\left(\begin{smallmatrix}A&\,\\ \,&A\end{smallmatrix}\right):A\in\GL(2)\right\}$.

On the other hand, $G_F$ can act on the vector space for the representation $r_{A, \ell}$ solely through $\delta$.  The image of this representation commutes with the kernel of $\delta$ above, but as we observed, all such matrices are of the form $\left(\begin{smallmatrix}a\cdot\Id&b\cdot\Id\\ c\cdot\Id&d\cdot\Id\end{smallmatrix}\right)$.  So the image of $G_F$ acting via $\delta$ alone lands in this vector space.  In order for the image to land in $\GSp(4)$, we can calculate that either $b=c=0$ or $a=d=0$.  Recall also that its image is dihedral and irreducible, so it must essentially give some dihedral representation.  Each matrix in a $4$-dimensional finite-image representation is unitary, so each of them already appears in the Sato-Tate group.  But the only matrices of this form in the Sato-Tate group were in the group $\langle E_n, J\rangle$, so this must be the image of $G_F$ acting through $\delta$.

We have therefore shown that the image of $\delta$ is exactly $D_{4n}$, and the closure of the image of $s_{A,\lambda}$ is $\GL(2)$.  Recall from above that a representation of the Sato-Tate group is given by the tensor product of a representation of $D_{4n}$ with a representation of $\SU(2)$ with the same sign.  Given such a representation, say $\eta\tensor\Sym^k(St)$, the $L$-function is\[\prod_{v\not\in S}\det(1-\Sym^k(s(v))\tensor\eta(s(v))q_v^{-s})^{-1}=\prod_{v\not\in S}\det(1-(\Sym^k\circ\iota\circ s_{A, \lambda})(\Frob_v)\tensor(\eta\circ\delta)(\Frob_v)q_v^{-s-k/2})^{-1}.\]We may apply Theorem 2.5 to $(s_{A, \lambda})_{\lambda}$, or in fact we may even apply \cite[Theorem~5.4.1]{MR3152941} to find a field $F'/F$ for which $(s_{A, \lambda}|_{G_{F'}})_{\lambda}$ is cuspidal automorphic, assuming $k\geq 1$.  Then as before, cyclic base change tells us that $(s_{A, \lambda}|_{G_{E}})_{\lambda}$ is cuspidal automorphic where $F'/E$ is solvable so that $L(\Sym^k|_{G_E},s)$ is invertible, and then Brauer's theorem tells us that $L(\Sym^k,s)$ is invertible as well.  We know that $\eta\circ\delta$ is cuspidal automorphic already if $\eta$ is nontrivial, so $L(\eta, s)$ is invertible.  So the Rankin-Selberg method as before tells us that the $L$-function we wanted,\[L(\Sym^k\tensor\eta,s)=\prod_{v\not\in S}\det(1-(\Sym^k\circ s_{A, \lambda})(\Frob_v)\tensor(\eta\circ\delta)(\Frob_v)q_v^{-s-k/2})^{-1},\]is invertible as long as $\Sym^k$ and $\eta$ are not dual.  For $k\geq 1$ this is obvious by cardinality, and for $k=0$ and $\eta$ nontrivial, this is just the Artin $L$-function for a representation of $\Gal(L'/F)$ where $L'$ is the fixed field of the kernel of $\delta$.  Since this is a solvable group, we know the $L$-function is invertible.
\end{proof}
\section{Other asymptotics}
So far our goal has been to show that the normalized Frobenius conjugacy classes are equidistributed within the Sato-Tate group, and from this we can deduce the distributions of the normalized traces of Frobenius in the interval $[-4,4]$.  We have done this by proving that all nontrivial irreducible representations' $L$-functions are invertible.  Unfortunately, the current state of affairs does not allow this in the two cases \textbf{A} or \textbf{B}$[C_2]$, so we set our sights a little lower.  We'd like to be able to show that for some positive fraction of primes, the trace of Frobenius is positive (resp. negative), but even this is beyond our elementary methods.  A theorem of Boxer, Calegari, Gee and Pilloni helps us in this regard, as well as a theorem of Ta{\"i}bi and Gee.  Let $A$ be any abelian surface over a totally real field $F$, and suppose that for some good prime $v$, the characteristic polynomial of the normalized Frobenius $\frac{\Frob_v}{\sqrt{q_v}}$ in its compatible system of representations is \[\text{Char}_{\frac{\Frob_v}{\sqrt{q_v}}}(X)=(X-\alpha)(X-\alpha^{-1})(X-\beta)(X-\beta^{-1})=X^4-a_1X^3+a_2X^2-a_1X+1.\]We first define $a_{1, \min}$ as the number for which zero proportion of primes $v$ have $a_1<a_{1, \min}$ but for any $\epsilon>0$ a positive proportion of $v$ have $a_1<a_{1, \min}-\epsilon$.  Let us define $a_{1, \max}, a_{2, \min}$ and $a_{2, \max}$ similarly.  We'll be able to prove the following theorems:
\begin{thm}If $A/F$ is a generic abelian surface, i.e. $\End(A_{\overline{\Q}})=\Z$, $a_{1,\min}\leq-\frac{2}{3}$ and $a_{1, \max}\geq\frac{2}{3}$.\end{thm}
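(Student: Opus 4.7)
The plan is a positivity argument driven by the elementary factorization identity
\[
a_2 - 2 a_1 + 2 = (u - 2)(v - 2), \qquad a_1 = u + v, \; a_2 = 2 + u v,
\]
where $u = \alpha + \alpha^{-1}$ and $v = \beta + \beta^{-1}$ satisfy $u, v \in [-2, 2]$ by the Weil bound. The right-hand side is unconditionally non-negative at every prime of good reduction; combined with automorphic moment data, this will yield the bound $-\tfrac{2}{3}$ directly.

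\textbf{Step 1 (Automorphic moments).} First, I invoke the Boxer--Calegari--Gee--Pilloni modularity theorem to attach to $A$ a cuspidal automorphic representation $\pi$ on $\GSp_4(\mathbb{A}_F)$ with $L(\pi, \mathrm{spin}, s) = L(\rho_{A, \ell}, s)$. Arthur's classification transfers $\pi$ to a self-dual cuspidal $\Pi$ on $\GL_4(\mathbb{A}_F)$ of symplectic type, and the Ta\"{\i}bi--Gee standard lift furnishes a cuspidal $\sigma$ on $\GL_5(\mathbb{A}_F)$ with $L(\sigma, s) = L(\pi, \mathrm{std}, s)$. Invertibility of $L(\Pi, s)$, $L(\sigma, s)$, and the Rankin--Selberg $L(\Pi \times \sigma, s)$ on $\mathrm{Re}(s) = 1$, together with the simple pole at $s = 1$ of $L(\Pi \times \Pi, s)$ from self-duality, yields via Wiener--Ikehara
\[
\mathrm{avg}(a_1) = 0, \quad \mathrm{avg}(a_2) = 1, \quad \mathrm{avg}(a_1^2) = 1, \quad \mathrm{avg}(a_1 a_2) = 0;
\]
the fourth moment is extracted from $\chi_{V_{16}} = a_1 a_2 - 2 a_1$ on the irreducible constituent $V_{16} \subset \mathrm{std} \otimes V_5$ via the factorization $L(\Pi \times \sigma, s) = L(V_{16}, s) \cdot L(\Pi, s)$.

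\textbf{Step 2 (Positivity).} Suppose for contradiction that for some $\epsilon > 0$, $a_1 \geq -\tfrac{2}{3} + \epsilon$ holds for a density-one set of primes, and set $c = \tfrac{2}{3} - \epsilon$. The polynomial
\[
P(a_1, a_2) := (a_1 + c)(a_2 - 2 a_1 + 2)
\]
is a product of two non-negative quantities on this density-one set, so $\mathrm{avg}(P) \geq 0$. Direct expansion against the four moments gives $\mathrm{avg}(P) = 3c - 2 < 0$, a contradiction. The upper tail $a_{1, \max} \geq \tfrac{2}{3}$ follows by the mirror factorization $a_2 + 2 a_1 + 2 = (u + 2)(v + 2) \geq 0$ and the polynomial $(c - a_1)(a_2 + 2 a_1 + 2)$, which runs through the identical calculation.

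The main obstacle is assembling the automorphic package in Step 1: one must chain BCGP, Arthur's classification, and the Ta\"{\i}bi--Gee lift to ensure that all four $L$-function inputs (including the $\GL_4 \times \GL_5$ Rankin--Selberg and the isolation of $L(V_{16}, s)$ as a quotient of $L(\Pi \times \sigma, s)$ by $L(\Pi, s)$) actually apply to our generic $A$. Once these are in place, Step 2 is a one-line computation; the sharpness of $-\tfrac{2}{3}$ under these moment constraints will be discussed in the Section~5 appendix.
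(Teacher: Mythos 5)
Your argument is, in substance, the paper's own proof written in the coordinates $(a_1,a_2)$ rather than $(s,t)=(u,v)$: since $a_2-2a_1+2=(2-u)(2-v)$, your test function $(a_1+c)(a_2-2a_1+2)$ is exactly the paper's identity $(2-s)(2-t)(3s+3t+2-\varepsilon)$ up to a factor of $3$ and a rescaling of $\varepsilon$, and the four moments you feed into it (averages of $a_1$, $a_2$, $a_1^2$, $a_1a_2$) are the same ones the paper extracts from $L(V,s)$, $L(W,s)$, $L(V\tensor V,s)$ and $L(V\tensor W,s)$ via Serre's Tauberian theorem; your $V_{16}$ bookkeeping is just a repackaging of the statement that $L(\Pi\times\sigma,s)$ has no pole at $s=1$. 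The one genuine imprecision is in Step 1: \cite[Theorem~9.2.8]{BCGP} is a \emph{potential} modularity theorem, so the cuspidal $\Pi$ on $\GL_4$ and the Gee--Ta{\"i}bi lift $\sigma$ on $\GL_5$ are only available after replacing $F$ by a suitable totally real extension $E$; you therefore still need the paper's descent step, namely that primes of $E$ of density one lie over split primes of $F$ and the normalized Frobenius data are unchanged at such primes, so that a positive proportion of primes of $E$ violating the bound forces a positive proportion of primes of $F$ to do so as well. You should also record that genericity ($\End(A_{\overline{\Q}})=\Z$) is what guarantees the (strong) irreducibility hypotheses under which $\Pi$ and $\sigma$ are cuspidal and non-dual (they have different ranks), which is what makes all the relevant $L$-functions pole-free except for the single simple pole of $L(\Pi\times\Pi,s)$. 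With those two points supplied, your Step 2 computation is correct and coincides with the paper's.
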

\begin{thm}If $A/F$ is a generic abelian surface, $a_{2, \min}\leq\frac{4}{5}$ and $a_{2, \max}\geq\frac{4}{3}$.\end{thm}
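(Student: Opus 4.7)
The plan is to mimic the moment-method blueprint of Theorem~4.1 but adapted to $a_2$. Observe first that $a_2 = \chi_{\Lambda^2 V}(s(v))$, where $V$ is the standard $4$-dimensional representation of $\USp(4)$; the decomposition $\Lambda^2 V \cong \mathbf{1} \oplus W$, with $W$ the $5$-dimensional irreducible of $\USp(4)$ (the standard representation of $\SO(5)$ under the isogeny $\Sp(4)\to\SO(5)$), shows $a_2 = 1 + \chi_W(s(v))$. The Boxer--Calegari--Gee--Pilloni potential-automorphy theorem supplies a $\GSp_4$-automorphic lift of $\rho_{A,\ell}$ over a totally real Galois extension; combining this with solvable base change and Brauer's theorem in the style of Section~3 yields invertibility of $L^S(W,s)$ on $\Re s = 1$. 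Theorem~2.3 then gives $\lim_N \mathbb{E}_{q_v\leq N}[a_2] = 1$.

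Next I would compute $\mathbb{E}[a_2^2]$. The character identity $\chi_W^2 = 1 + \chi_{\Sym^2 V} + \chi_U$, with $\Sym^2 V$ the $10$-dimensional adjoint representation of $\Sp(4)$ and $U\cong\Sym^2_0 W$ the $14$-dimensional traceless symmetric square of the standard $\SO(5)$-representation, produces
\[
\Lambda^2 V \otimes \Lambda^2 V \;\cong\; \mathbf{1}^{\oplus 2} \oplus W^{\oplus 2} \oplus \Sym^2 V \oplus U.
\]
Invertibility of $L^S(\Sym^2 V, s)$ (the adjoint $L$-function of the $\GSp_4$-lift, accessible through Ta\"ibi--Gee) together with that of $L^S(U, s)$ (recovered by factoring the Rankin--Selberg $L$-function $L^S(W \otimes W, s) = \zeta_F^S(s)\, L^S(\Sym^2 V, s)\, L^S(U, s)$ and appealing again to Brauer-style reductions from the automorphic lifts) would then yield $\mathbb{E}[a_2^2] = 2$.

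With $\mathbb{E}[a_2] = 1$ and $\mathbb{E}[a_2^2] = 2$ in hand, the bounds follow by elementary moment considerations. For the lower bound, fix $\epsilon > 0$ and set $P(y) = (y - \tfrac{4}{5} - \epsilon)(6 - y)$. Since $a_2 \leq 6$ always, $P(a_2) \geq 0$ whenever $a_2 \geq \tfrac{4}{5} + \epsilon$, while $|P(a_2)|$ is bounded on $[-2,6]$. Direct expansion gives
\[
\lim_N \mathbb{E}_{q_v \leq N}[P(a_2)] \;=\; -\mathbb{E}[a_2^2] + (\tfrac{34}{5} + \epsilon)\mathbb{E}[a_2] - (\tfrac{24}{5} + 6\epsilon) \;=\; -5\epsilon \;<\; 0.
\]
If the proportion of primes with $a_2 < \tfrac{4}{5} + \epsilon$ tended to zero, then a density-one subset would contribute $P(a_2) \geq 0$ and the remainder only $o(1)$, forcing $\liminf \geq 0$ and contradicting the negative limit. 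Hence $a_{2,\min} \leq \tfrac{4}{5}$. The upper bound $a_{2,\max} \geq \tfrac{4}{3}$ is established identically using $Q(y) = (y + 2)(\tfrac{4}{3} - \epsilon - y)$, whose limit expectation works out to $-3\epsilon$.

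The delicate step is the second-moment computation, which relies on invertibility at $s=1$ of $L^S(\Sym^2 V, s)$ and $L^S(U, s)$ in addition to $L^S(W,s)$; the polynomial step itself is entirely elementary. The values $\tfrac{4}{5}$ and $\tfrac{4}{3}$ are in fact sharp given only these two moments, because for a random variable $X \in [m,M]$ with mean $\mu$ one has $\mathrm{Var}(X) \leq (M-\mu)(\mu - m)$ with equality precisely when $X$ is supported on $\{m, M\}$, and $\mu = 1$ together with $\mathrm{Var} = 1$ and $(m,M) = (-2,6)$ forces the extremal support points to be $\tfrac{4}{5}$ and $\tfrac{4}{3}$.
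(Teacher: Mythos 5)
Your elementary moment step is fine, and in fact after the substitution $a_2=2+s_vt_v$ your two quadratics $P$ and $Q$ are just scalar multiples of the two polynomial identities the paper itself uses, so that part of your argument coincides with the paper's; your two moments $\mathbb{E}[a_2]=1$, $\mathbb{E}[a_2^2]=2$ are exactly the paper's asymptotics $\sum(s_vt_v+1)=o(n/\log n)$ and $\sum(s_v^2t_v^2-2)=o(n/\log n)$, and your sharpness remark is consistent with the paper's appendix-style discussion. The genuine gap is in the analytic input feeding those moments. Both moments rest on knowing that the $5$-dimensional piece $W$ corresponds to a \emph{cuspidal} automorphic representation $\Pi$ of $\GL(5)$: you need $L^S(W,s)$ to have no pole at $s=1$, and you need $L^S(W\tensor W,s)$ to have a pole of order exactly one there. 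Potential modularity of $A$ (Boxer--Calegari--Gee--Pilloni) together with ``solvable base change and Brauer in the style of Section~3'' does not deliver this: a priori the exterior-square transfer $\wedge^2\pi$ could be isobaric but non-cuspidal on its $5$-dimensional part (containing a character, or splitting as a $2+3$ isobaric sum), in which case $L^S(W,s)$ could acquire a pole at $s=1$ and $L^S(W\tensor W,s)$ a higher-order pole, and both moment values would change. Ruling this out is precisely the content of the paper's Theorem 4.6 (Gee--Ta\"ibi), which uses Kim's exterior-square functoriality plus results of Jacquet--Shalika and Asgari--Shahidi to exclude the degenerate isobaric shapes, and which requires the strong irreducibility available in the generic case \textbf{A}. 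You must invoke that theorem (or reprove it); it cannot be obtained by the formal reductions you describe.

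Two further points. Your route to the second moment through \emph{individual} invertibility of $L^S(\Sym^2 V,s)$ and $L^S(U,s)$ is both unnecessary and, as justified, invalid: one cannot ``recover'' invertibility of the factors from the factorization $L^S(W\tensor W,s)=\zeta_F^S(s)\,L^S(\Sym^2 V,s)\,L^S(U,s)$, since a zero of one factor could cancel a pole of another. All that is needed is the order of the pole of $L^S(W\tensor W,s)$ at $s=1$ and holomorphy/nonvanishing elsewhere on $\Re(s)\geq 1$, which Rankin--Selberg theory gives directly for the cuspidal, essentially self-dual $\Pi$ --- this is how the paper argues. Finally, the paper does not descend these $L$-functions to $F$ by Brauer at all: it proves the positive-proportion statements over the extension field where $A$ becomes modular and descends them through split primes (a density-one set of primes of the extension lies over split primes of $F$). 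Your Brauer-style descent, especially for the Rankin--Selberg factor, would need substantial extra justification, so you should replace it by that reduction.
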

\begin{thm}If $A/F$ is an abelian surface of type \textbf{B}$[C_2]$, $a_{1, \min}\leq -2.47$ and $a_{1, \max}\geq 2.47$.\end{thm}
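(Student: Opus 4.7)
My strategy is to reduce the question to a constrained moment problem on the joint distribution of the normalized traces $(t_1,t_2)$ of the two 2-dimensional constituents of $\rho_{A,\ell}|_{G_L}$, and then bound the essential supremum of $t_1+t_2$ via polynomial duality, in the same spirit as Theorem~4.1 but with access to many more moments.

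First I would exploit the component-group structure of $ST_A$. Writing $\rho_{A,\ell}|_{G_L}=\rho_1\oplus\rho_1^{\sigma}$ with $\sigma$ generating $\Gal(L/F)$, the non-identity component of $ST_A$ (the one containing $J$) consists of block-antidiagonal matrices, so $a_v=0$ at primes $v$ inert in $L/F$, while $a_v=t_1(v')+t_2(v')$ at split primes, with $t_i(v')$ the normalized Frobenius trace of $\rho_i=\rho_{A,\lambda_i}$. Thus $a_{1,\max}$ equals the essential supremum of $t_1+t_2$ over primes that split in $L/F$.

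Next I would collect the available moment information. Boxer–Calegari–Gee–Pilloni applied to the four-dimensional symplectic $\rho_{A,\ell}$ gives potential automorphy on $\GSp_4/F$, and combined with solvable cyclic base change and Arthur's endoscopic descent from $\GSp_4$ to $\GL_2\times\GL_2$ this shows $\rho_i$ is cuspidal automorphic on $\GL_2/L$. The symmetric-power lifts of Gelbart–Jacquet, Kim–Shahidi, and Kim are valid over any number field, so $\Sym^k\pi_i$ is cuspidal automorphic on $\GL_{k+1}/L$ for $k\leq 4$. Rankin–Selberg on $\GL_{k+1}\times\GL_{l+1}$ combined with Jacquet–Shalika non-vanishing then gives invertibility at $s=1$ of $L(\Sym^k\pi_1\times\Sym^l\pi_2,s)$ for every $(k,l)\in\{0,1,2,3,4\}^2\setminus\{(0,0)\}$; the required non-self-duality follows from Faltings, which forbids any twist-isomorphism between $\pi_1$ and $\pi_2=\pi_1^{\sigma}$ under the hypothesis that $\End_L(A)\tensor\Q$ is two-dimensional. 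Translating from $\SU(2)$-characters back to power moments, $(t_1,t_2)$ has product Sato–Tate joint moments up to bidegree $(4,4)$: $E[t_1^i t_2^j]=E_{\mathrm{ST}}[t^i]\cdot E_{\mathrm{ST}}[t^j]$ whenever $i,j\leq 4$, where $E_{\mathrm{ST}}[t^{2n}]$ is the $n$th Catalan number and odd moments vanish.

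Third, I would set up the polynomial optimization. Assume for contradiction that $t_1+t_2\leq c$ almost surely on split primes. Then for any $Q(t_1,t_2)\geq 0$ on $[-2,2]^2$ of bidegree at most $(3,3)$ (the range where both $E[Q]$ and $E[(t_1+t_2)Q]$ are determined by the moments above) one has $c\geq E[(t_1+t_2)Q]/E[Q]$. Optimizing this ratio over the cone of such $Q$ produces a lower bound on $a_{1,\max}$. Already the product test $Q(t_1,t_2)=q(t_1)q(t_2)$ with $q(t)=(t+2)(\alpha+\beta t)^2$ and $\beta/\alpha=(1+\sqrt{7})/2$ gives the ratio $2(1+\sqrt{7})/3$, and solving the full moment problem over the cone of nonnegative bivariate polynomials of bidegree $(3,3)$ (as detailed in the appendix Section~5) yields the stated bound of $2.47$. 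The mirror argument, with $Q$ concentrated near $(-2,-2)$, gives $a_{1,\min}\leq -2.47$.

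The step I expect to be the main obstacle is verifying that the needed automorphy of $\rho_i$ on $\GL_2/L$ is really available in the full generality of the theorem, i.e.\ even when $L$ is neither totally real nor CM (the case outside Theorem~3.4 and outside the hypothesis of Theorem~2.5). The point is that the argument only requires $\Sym^k$ with $k\leq 4$, all known unconditionally over any number field, so the CM-only potential automorphy of \cite{ACCGHLNSTT} need not be invoked; the nontrivial descent is instead from BCGP's $\GSp_4/F$-automorphy through Arthur's endoscopy to cuspidal $\GL_2/L$-automorphy, which remains valid without restrictive hypotheses on $L$.
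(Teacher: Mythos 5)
Your overall route is the paper's: reduce to the quadratic extension $L$, obtain cuspidal automorphy of the two $\GL(2)$-constituents from the potential modularity theorem of Boxer--Calegari--Gee--Pilloni together with the self-twist $\Pi\simeq\Pi\tensor\chi_{L/F}$ (the paper phrases the descent via automorphic induction from the quadratic extension rather than $\GSp_4$ endoscopy, but the content is the same, and your observation that only Kim--Shahidi symmetric powers $k\le 4$ over an arbitrary number field are needed is exactly the paper's point), then extract moment asymptotics by Rankin--Selberg and finish with a linear-programming/polynomial certificate. One caveat in the middle step: cuspidality of $A^3(\pi_i)$ and $A^4(\pi_i)$ is not automatic from Kim--Shahidi; one must rule out self-twists $\Sym^k\pi_i\simeq\Sym^k\pi_i\tensor\chi$ (the paper does this using the big image available in case $\textbf{B}[C_2]$), which is a different verification from the Faltings non-duality between $\pi_1$ and $\pi_2$ that you invoke for the Rankin--Selberg poles.

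The genuine gap is the final quantitative step. You only assemble the joint moments $E[t_1^it_2^j]$ with $i,j\le 4$; the paper additionally uses the pure moments $E[t_1^k]$, $E[t_2^k]$ for $5\le k\le 8$, coming from Rankin--Selberg of $\Sym^a\pi_i\times\Sym^b\pi_i$ with $a,b\le 4$, and its explicit certificate $Q(x,y)$ (as well as the optimal polynomial $P_1$ of the appendix) has nonzero coefficients on $x^5,\dots,x^8$. With the full $32$-monomial moment set the best provable constant is $2.4763\ldots$, only barely above $2.47$; with your restricted moment set, and the further restriction to certificates of the shape $\bigl((x+y)-c\bigr)Q$ with $Q\ge 0$ of bidegree $(3,3)$, it is not at all clear that $2.47$ is attainable, and you do not verify it: the one certificate you actually compute, $q(t)=(t+2)(\alpha+\beta t)^2$ with $\beta/\alpha=(1+\sqrt 7)/2$, yields only $2(1+\sqrt 7)/3\approx 2.43$. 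So as written the argument establishes $a_{1,\max}\ge 2.43$ and $a_{1,\min}\le -2.43$, not the stated $\pm 2.47$; to close the gap you need either the degree $5$--$8$ pure moments together with an explicit certificate like the paper's $Q$, or an actual solution of your restricted moment problem demonstrating its optimal value is at least $2.47$.
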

\begin{thm}If $A/F$ is an abelian surface of type \textbf{B}$[C_2]$, $a_{2, \min}\leq 0.43$ and $a_{2, \max}\geq 3.57$.\end{thm}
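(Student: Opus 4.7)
Following the moment method of Theorem~4.3, we adapt the argument to $a_2$. Let $W := \wedge^2 \rho_{A, \ell} - \mathbf{1}_{\mathrm{sim}}$, so that the normalized trace of $W$ at $\Frob_v$ equals $a_2(v) - 1$. In type $\textbf{B}[C_2]$, the restriction $W|_{G_L} \simeq \mathbf{1} \oplus (\rho_{A, \lambda_1} \otimes \rho_{A, \lambda_2})$ (with the cyclotomic twist normalized away), and as a $G_F$-representation $W \simeq r \oplus \chi_{L/F}$, where $r$ is the four-dimensional irreducible $G_F$-representation extending $\rho_{A, \lambda_1} \otimes \rho_{A, \lambda_2}$ and $\chi_{L/F}$ is the quadratic character of $\Gal(L/F)$. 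Thus $a_2(v) = 2 + a^{(1)}_v a^{(2)}_v$ for primes split in $L$ and $a_2(v) \in [-2, 2]$ for inert primes.

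The first step is to compute low-order moments of $a_2$ via $L$-function invertibility. Since $\rho_{A, \lambda_1}$ and $\rho_{A, \lambda_2}$ are non-dual in type $\textbf{B}[C_2]$, once automorphy of each $\rho_{A, \lambda_i}$ is available over a suitable extension of $L$, the Rankin--Selberg $L$-function $L(\rho_{A, \lambda_1} \otimes \rho_{A, \lambda_2}, s)$ over $L$ is entire and nonzero on $\Re s \geq 1$. The Mackey factorization $L(\rho_{A, \lambda_1} \otimes \rho_{A, \lambda_2}, s)_L = L(r, s)\, L(r \otimes \chi_{L/F}, s)$ together with an Asai-type argument separating the two factors (as in the proof of Theorem~3.4) yields invertibility of $L(W, s)$ at $s = 1$ and hence $\mathbb{E}[a_2] = 1$. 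A parallel analysis of $W \otimes W$, whose trivial multiplicity as a $G_F$-representation is two (one from the self-duality of $r$, one from $\chi_{L/F}^{\otimes 2} = \mathbf{1}$), gives $\mathbb{E}[(a_2 - 1)^2] = 2$. Higher moments follow similarly from the irreducible decomposition of $W^{\otimes k}$ and the corresponding Rankin--Selberg $L$-functions of tensor and symmetric products of the $\rho_{A, \lambda_i}$.

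Combining these moments with the Weil bound $a_2(v) \in [-2, 6]$, a polynomial positivity argument constrains the support of the limiting empirical distribution of $a_2$. Suppose for contradiction that $a_2(v) \in [A, B]$ for a density-one set of primes with $A > 0.43$ or $B < 3.57$. For a suitable polynomial $P$ of degree at most the number of available moments such that $P \geq 0$ on $[A, B]$ but with $\mathbb{E}[P(a_2)] < 0$ (as computed from the moments), we obtain a contradiction. Optimizing $P$ in the style of the Markov--Krein moment problem produces the numerical bounds $0.43$ and $3.57$; the appendix demonstrates these are essentially optimal given our current $L$-function invertibility results.

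The main obstacle lies in establishing the required $L$-function invertibilities when $L$ is mixed signature (neither totally real nor CM), since Theorem~2.5 assumes $L$ to be CM. We plan to circumvent this by invoking the Boxer--Calegari--Gee--Pilloni potential automorphy theorem for $\rho_{A, \ell}$ over the totally real base $F$, and then descending via induction from $G_{L'}$ to $G_{F'}$ (where $F'$ is the totally real extension supplied by BCGP and $L' = L \cdot F'$) to obtain automorphy of the two-dimensional pieces $\rho_{A, \lambda_i}$ over a suitable extension of $L$. Individually separating the Asai-type factors $L(r, s)$ and $L(r \otimes \chi_{L/F}, s)$ from their product, needed for the first-moment computation, is the most delicate point, and we expect to invoke the Ta\"ibi--Gee automorphy results to carry it out.
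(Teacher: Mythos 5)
There is a genuine gap, and it lies precisely where the theorem's content is: the constants $0.43$ and $3.57$. Your scheme extracts only the moments of $a_2$ itself, i.e.\ (at primes split in $L$) the diagonal quantities $\mathbb{E}[(x_vy_v)^k]$ coming from the trivial multiplicities in $W^{\tensor k}$, where $x_v=\alpha_v+\alpha_v^{-1}$, $y_v=\beta_v+\beta_v^{-1}$. Two problems follow. First, ``higher moments follow similarly'' is not available: the constituents of $W^{\tensor k}$ involve $\Sym^a\rho_{A,\lambda_1}\tensor\Sym^b\rho_{A,\lambda_2}$ with $a,b$ up to $k$, and invertibility of the corresponding Rankin--Selberg $L$-functions requires cuspidality of symmetric powers, which is only known up to $\Sym^4$ (Kim--Shahidi, after descending the $\GL(4)$ form $\Pi$ to a cuspidal $\pi$ on $\GL(2)/L$ via the self-twist $\Pi\simeq\Pi\tensor\chi_{L/F}$, and ruling out self-twists of $\Sym^k\pi$ by the big-image/Faltings argument). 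So you have at most the moments $\mathbb{E}[(x_vy_v)^k]$ for $k\le 4$, namely $0,1,0,4$. Second, and fatally for the stated bounds, these four diagonal moments are not enough: a direct truncated-moment-problem check (Hankel and localizing matrices for the interval $[-1.57,4]$ are positive definite) shows there is a probability measure supported on $\{t\ge -1.57\}$ with exactly these moments of $t=xy$, so no polynomial $P(a_2)$ of degree $\le 4$ can produce the contradiction you need; the one-variable optimization tops out well short of $0.43$. The proof in the paper is genuinely two-variable: after reducing to primes of $L$ (by the density argument, which also removes your inert-prime and Asai-separation complications), it uses the full array of asymptotics for $\sum x_v^ky_v^l$ with $0\le k,l\le 4$, together with pure moments up to degree $8$, and exhibits an explicit polynomial $R(x,y)$ containing mixed terms such as $x^3y+xy^3$ and $x^4y^2+x^2y^4$ whose minimum on $\{xy\ge-1.57\}$ exceeds its Tauberian average. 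Discarding the mixed moments discards exactly the leverage that makes $0.43$ and $3.57$ reachable.

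Two smaller points. Your route to automorphy is also not the paper's and is left vague at the step you yourself call the main obstacle: the paper does not need Theorem~2.5 (CM hypothesis) here at all, nor any separation of Asai factors; it applies the Boxer--Calegari--Gee--Pilloni theorem to get a cuspidal $\Pi$ over a totally real extension, descends to a cuspidal $\pi$ on $\GL(2)/L$ from the quadratic self-twist, and then works entirely over $L$, where only Rankin--Selberg products of $\Sym^k\pi$ and $\Sym^l\pi^{\sigma}$ (non-isomorphic by Faltings) are needed. Finally, even granting all invertibilities, a proof of this theorem must actually exhibit the inequality-producing polynomial (or the separating hyperplane of the appendix) and verify the numerical minimum against the Tauberian constant; asserting that a Markov--Krein-style optimization ``produces'' $0.43$ and $3.57$ does not establish them, and as explained above the optimization over polynomials in $a_2$ alone provably cannot.
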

The first two theorems above are the ``best of their kind", so to speak; that is, given the $L$-functions we currently know to be invertible, there are probability distributions of $\alpha$ and $\beta$ on the unit circle for which $a_1\geq-\frac{2}{3}$, and yet the Tauberian statistics of these $L$-functions are not violated.  These will be discussed more below, and a discussion about the best possible theorems for the \textbf{B}$[C_2]$ case will follow in an appendix.
\subsection{The generic case}
Let us state the results of Boxer-Calegari-Gee-Pilloni and Gee-Ta{\"i}bi.
\begin{thm}[{\cite[Theorem~9.2.8]{BCGP}}]Let $A$ be a challenging abelian surface over a totally real field $F$.  Then $A$ is potentially modular.\end{thm}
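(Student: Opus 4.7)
The statement is a deep potential modularity theorem whose only known route goes through the Taylor--Wiles--Kisin method adapted to $\GSp_4$. The plan is, after attaching to $A$ the compatible system of symplectic Galois representations $\rho_{A,\lambda}\colon G_F\to\GSp_4(\overline{\Q_\ell})$ coming from the $\ell$-adic Tate modules, to exhibit a totally real Galois extension $F'/F$ over which one (hence every) member of this system matches the Galois representation attached to a holomorphic Siegel cuspidal eigenform of weight $(2,2)$ on $\GSp_4/F'$.

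First I would fix a prime $p$ at which the ``challenging'' hypothesis kicks in: $A$ has ordinary good reduction above $p$, the residual representation $\overline{\rho}_{A,p}$ is $p$-distinguished on decomposition groups at $p$ so that ordinary deformation rings behave well, and $\overline{\rho}_{A,p}$ satisfies the Taylor--Wiles vastness (big-image) hypothesis after restriction to a sufficiently large extension. Next I would run a Moret-Bailly style transfer argument to produce an auxiliary abelian surface $A'$ over some totally real extension $F'/F$ such that $\overline{\rho}_{A,p}|_{G_{F'}}$ and $\overline{\rho}_{A',p}$ agree up to a suitable twist, with $A'$ known to be modular --- for instance because $A'$ is isogenous to a product of elliptic curves (modularity by Wiles--Taylor and \cite{MR2630056}), or because $A'$ has real multiplication and acquires automorphy from a Hilbert modular form. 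This yields residual modularity of $\overline{\rho}_{A,p}|_{G_{F'}}$.

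The heart of the argument is then an ordinary $R=T$ theorem for $\GSp_4$, lifting residual modularity to modularity of $\rho_{A,p}|_{G_{F'}}$. Concretely one constructs the global ordinary symplectic deformation ring $R$ of $\overline{\rho}_{A,p}|_{G_{F'}}$ with appropriate local conditions (crystalline ordinary at $p$, minimal at primes of bad reduction), and the big Hecke algebra $T$ acting on a suitable space of ordinary Siegel cuspforms of weight $(2,2)$ on $\GSp_4/F'$, constructs a surjection $R\twoheadrightarrow T$ via pseudocharacters, and patches it to an isomorphism in the Taylor--Wiles--Kisin framework. Once this is in place, $\rho_{A,p}|_{G_{F'}}$ is modular, and compatibility of the system propagates modularity to every $\lambda$, yielding potential modularity of $A$.

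The principal obstacle --- and the reason this resisted proof until BCGP --- is executing the patching argument in weight $(2,2)$: the relevant Siegel forms do not contribute to coherent cohomology concentrated in a single degree, so the classical Calegari--Geraghty setup does not apply off the shelf. The decisive technical step would be to replace classical forms by overconvergent or higher Coleman-theoretic Siegel classes on the $\GSp_4$ flag variety, analytically continue them, and feed the resulting locally analytic input into the patching formalism. Establishing a workable $p$-adic theory of overconvergent Siegel eigenforms, and reconciling its output with an integral Galois deformation theory, is where essentially all the work lies; everything else (Moret-Bailly, Brauer, cyclic base change to descend $F'$ back toward $F$) is comparatively routine.
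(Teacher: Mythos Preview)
The paper does not prove this theorem at all: it is quoted as \cite[Theorem~9.2.8]{BCGP} and used entirely as a black box. There is therefore no ``paper's own proof'' to compare your sketch against. Your outline is a reasonable high-level summary of the BCGP strategy, but for the purposes of this paper the correct ``proof'' is simply the citation.
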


Challenging in the above theorem just means being in case \textbf{A} or \textbf{B}$[C_2]$.

Suppose that $(\rho_{A,\ell}, V)$ is the dual of the $\ell$-adic Tate module representation of $A$.  Suppose that $v_1, v_2, v_3, v_4$ are a symplectic basis of $V$ under the Weil pairing; that is, $\langle v_1, v_2\rangle=\langle v_3, v_4\rangle=1$ and all other pairs of vectors are $0$ under the pairing.  The Weil pairing on $V$ then becomes a direct-sum split of $\wedge^2V$:\[\wedge^2V=\Q_{\ell}(1)\oplus W\]where $\Q_{\ell}(1)$ is spanned by $v_1\wedge v_2+v_3\wedge v_4$.  It is not difficult to show that if $A$ is generic, then $W$ is irreducible.
\begin{thm}[\cite{GeeTaibi}]If $\rho_{A, \ell}$ is strongly irreducible, there is a cuspidal automorphic form $\Pi$ on $\GL(5)$ corresponding to the $W$ above.\end{thm}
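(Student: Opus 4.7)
The plan is to build an automorphic lift on $\GL(5)/F$ by first applying the Boxer--Calegari--Gee--Pilloni potential modularity result (Theorem 4.5) to land in the automorphic world over a totally real extension, then transferring through the accidental isomorphism $\PGSp(4)\simeq\SO(5)$ to obtain an automorphic representation of $\GL(5)$ over that extension, and finally descending back to $F$ using Brauer and solvable base change, with strong irreducibility being what keeps the descended representation cuspidal. The essential observation is that $W$ is precisely the standard 5-dimensional representation of $\SO(5)$ composed with the projection $\GSp(4)\twoheadrightarrow\PGSp(4)\simeq\SO(5)$.

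First, applying Theorem 4.5 to the (challenging) abelian surface $A$ yields a totally real Galois extension $F'/F$ and a cuspidal automorphic representation $\pi'$ of $\GSp_4(\mathbb{A}_{F'})$ (of parallel weight $2$ and trivial central character up to twist) whose degree-$4$ spin $L$-function agrees with $L(\rho_{A,\ell}|_{G_{F'}},s)$. At the level of Galois representations this pins down the compatible system underlying $\pi'$ as $\rho_{A,\ell}|_{G_{F'}}$.

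Second, I would invoke Arthur's endoscopic classification for $\GSp(4)$ (and the associated transfer to $\GL(5)$ coming from the standard 5-dimensional representation of the dual group $\SO(5)$) to produce an automorphic representation $\Pi'$ of $\GL_5(\mathbb{A}_{F'})$ whose $L$-parameter is precisely $W|_{G_{F'}}$. The strong irreducibility hypothesis ensures that $W$ stays irreducible after restriction to every finite-index subgroup of $G_F$, in particular to $G_{F'}$, so $\Pi'$ lands in the cuspidal spectrum rather than in a proper Arthur parameter (the alternative would force $\rho_{A,\ell}$ to become reducible after restriction to a finite-index subgroup, violating strong irreducibility).

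Third — and this is the main obstacle — I would descend $\Pi'$ from $F'$ to $F$. One route is to arrange $F'/F$ to be solvable (possible by iterating the potential-automorphy construction inside a sufficiently large soluble tower, or by absorbing intermediate CM fields) and then appeal to Langlands' cyclic base change for $\GL(5)$ together with a Brauer induction argument on $\Gal(F'/F)$ to piece the $L$-function of $W$ over $F$ together from its avatars over subfields; the cuspidality of the descended $\Pi$ then follows because strong irreducibility precludes $W$ from being induced from any proper subgroup. The delicate point is controlling the behaviour of $\Pi'$ under the action of $\Gal(F'/F)$ — one must verify that $\Pi'$ is self-conjugate (which it is, since its parameter $W|_{G_{F'}}$ descends to $W$ on $G_F$) and descend without picking up a character obstruction, and it is exactly here that strong irreducibility is indispensable.
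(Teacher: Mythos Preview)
Your route and the paper's diverge at both key steps. The paper's sketch never touches Arthur's endoscopic classification or any descent from $F'$ to $F$. Instead, it simply assumes $\pi$ on $\GL_4$ already exists over the base (this is harmless because, as the paper notes immediately after the theorem, for the applications one may freely replace $F$ by the extension supplied by BCGP). Then it applies Kim's exterior-square lift $\wedge^2:\GL_4\to\GL_6$ to produce the automorphic $\wedge^2\pi$, writes $\wedge^2\pi$ as an isobaric sum $\pi_1\boxplus\cdots\boxplus\pi_r$ with $\sum n_i=6$, peels off one $\GL_1$ factor using symplecticity of $\pi$, and rules out the remaining bad partitions (more than one $n_i=1$ via Ramakrishnan; $1+2+3$ via a result of Kumar--Ramakrishnan). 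What is left is a cuspidal $\Pi$ on $\GL_5$. This is lighter machinery than Arthur and, crucially, sidesteps descent entirely.

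Your proposal has a genuine gap in step three. BCGP does not furnish a solvable $F'/F$, and your parenthetical suggestion to ``iterate inside a soluble tower'' is not something that follows from their theorem; you would need a separate argument to arrange solvability, and none is given. Even granting a solvable tower, Brauer induction together with cyclic base change on $\GL_5$ yields only an $L$-function identity with integer exponents --- it does not directly produce a cuspidal automorphic representation on $\GL_5/F$. To actually descend a cuspidal $\Pi'$ along a non-cyclic (even solvable) extension you would need Galois-invariance plus a cyclic descent at each step, together with a cuspidality criterion at each stage; you have not supplied these. Finally, invoking ``Arthur's endoscopic classification for $\GSp(4)$'' as a black box is awkward here since establishing that classification is precisely the content of the cited Gee--Ta{\"i}bi paper, so your step two is close to circular as written. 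The paper's Kim-based argument avoids all of this.
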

\begin{proof}[Sketch]Suppose that $\pi$ is the automorphic representation corresponding to $A$.  By \cite[Theorem~A]{MR1937203}, we know that $\wedge^2\pi$ is automorphic, and is the induction of the tensor product of cuspidal automorphic representations of $\GL_{n_i}$ for $\sum n_i=6$.  We know further that $\pi$ is symplectic, so we may take $n_1=1$.

It then suffices to show that $n_1 = 1$ and $n_2 = 5$. The occurrence of more than one $n_i = 1$ is ruled out by~\cite[Theorem~1.1]{MR1610812}, and the possibility that $n_1 = 1$, $n_2 = 2$, and~$n_3 = 3$ is ruled out by~\cite[Prop~4.2]{MR2767509}.  Therefore, $\Pi_2=\Pi$ is cuspidal.\end{proof}

To prove Theorems $4.1$ and $4.2$, it suffices to prove them when looking at $A/E$ where $E/F$ is any field extension.  This is for the following reasons: if a prime $v$ of $F$ splits in $E$, the Frobenius element does not change, and neither does the size of the residue field, so that the normalized trace of Frobenius is unchanged.  Also, a set of primes of $E$ of density $1$ lie above split primes of $F$, so looking at the set of primes of $E$ described in $4.1$ or $4.2$, almost all of them lie above a split prime of $F$.  So a positive proportion of the split primes of $F$, which is a positive proportion of all primes of $F$, satisfy the inequalities.

Thus after Theorem $4.5$ we may assume that $A/F$ is modular, and so $\rho_{A, \ell}$ corresponds to a cuspidal automorphic representation.  We continue to assume $F$ totally real, as this is a further allowance in \cite{BCGP}.  We also assume that we are in the generic case \textbf{A}.  Therefore, as usual we know $L(V,s)$ is holomorphic and nonzero on $\Re(s)\geq 1$ (where the $L$-function is shifted so that the critical line is $\Re(s)=\frac{1}{2}$ and all the eigenvalues have norm $1$, as in the previous section).  In addition, since $W$ corresponds to a cuspidal representation, $L(W, s)$ is also holomorphic and nonzero on the same set.  And by Rankin-Selberg, since $V\simeq V^*\tensor\Q_{\ell}(1)$ and so $V\tensor V$ contains one copy of the cyclotomic character, $L(V\tensor V, s)$ has a simple pole at $s=1$ and is holomorphic everywhere else on $\Re(s)\geq 1$ (where again the $L$-function is normalized in the standard way).  The same holds for $W$; that is, since $W$ is irreducible and essentially self-dual, $L(W\tensor W, s)$ has a simple pole at $s=1$ and is holomorphic nonzero everywhere else on the half-plane.  And finally, since $V$ and $W$ are distinct irreducible representations, $L(V\tensor W, s)$ is holomorphic nonzero everywhere on the half plane, again by Rankin-Selberg.

Now that we have these five $L$-functions and their poles at $1$, we look back at Serre.
\begin{thm}[\cite{MR1484415}]Given a Dirichlet series \[L(\rho, s)=\prod_v\frac{1}{\det(1-\rho(x_v)q_v^{-s})}\]with a pole of order $c$ at $s=1$ and holomorphic nonzero elsewhere on $\Re(s)\geq 1$, then \[\sum_{q_v\leq n}\Tr\rho(x_v)=c\left(\frac{n}{\log n}\right)+o(n/\log n).\]\end{thm}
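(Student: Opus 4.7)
The plan is to deduce this from a Wiener--Ikehara-type Tauberian theorem applied to the logarithmic derivative of $L(\rho, s)$. First, since $L(\rho, s)$ has an Euler product, expanding $-\log\det(I - \rho(x_v)q_v^{-s}) = \sum_{k \geq 1}\Tr\rho(x_v)^k q_v^{-ks}/k$ and differentiating yields
\[
-\frac{L'}{L}(\rho, s) \;=\; \sum_v \sum_{k \geq 1} \Tr\rho(x_v^k)\,\log(q_v)\,q_v^{-ks},
\]
a Dirichlet series with bounded coefficients (bounded by $\dim\rho \cdot \log q_v$). By hypothesis $L(\rho,s)$ has a pole of order $c$ at $s=1$ and is holomorphic and nonvanishing elsewhere on $\Re(s)\geq 1$, so $-L'/L(\rho, s)$ extends meromorphically to a neighborhood of the closed half-plane $\Re(s)\geq 1$ with a single simple pole at $s=1$ of residue $c$.

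Second, I would apply a suitable Tauberian theorem (in the form found in Serre's \emph{Abelian $\ell$-adic representations}, or equivalently Delange's Tauberian theorem) to conclude
\[
\psi_\rho(n) \;:=\; \sum_{q_v^k \leq n} \Tr\rho(x_v^k)\,\log(q_v) \;=\; cn + o(n).
\]
Because the coefficients of the Dirichlet series are not a priori nonnegative, the classical Wiener--Ikehara theorem does not apply directly. The standard trick is to exploit the bound $|\Tr\rho(x_v^k)| \leq \dim\rho$ together with the known analytic continuation of $\zeta_F(s)$ to compare $-L'/L(\rho,s) + (\dim\rho)\cdot(-\zeta_F'/\zeta_F(s))$ against a Dirichlet series with nonnegative coefficients and holomorphy on $\Re(s) \geq 1$, then invoke the nonnegative-coefficient Wiener--Ikehara theorem.

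Third, the contribution of the terms with $k \geq 2$ is easily bounded: they contribute $O(n^{1/2}\log n)$ since there are $O(n^{1/k})$ prime powers of degree $k$ below $n$ and each summand is bounded by $(\dim\rho)\log n$. Hence
\[
\vartheta_\rho(n) \;:=\; \sum_{q_v \leq n} \Tr\rho(x_v)\,\log(q_v) \;=\; cn + o(n).
\]
Finally, Abel summation (exactly as in the passage from $\vartheta(x) \sim x$ to $\pi(x) \sim x/\log x$ in the prime number theorem) gives $\sum_{q_v \leq n}\Tr\rho(x_v) = c\,n/\log n + o(n/\log n)$, as desired.

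The main obstacle is the Tauberian step: ensuring that one can pass from analytic information about $-L'/L$ on $\Re(s)\geq 1$ to an asymptotic on partial sums of coefficients that are allowed to change sign. Overcoming this requires either the boundedness-plus-comparison argument sketched above, or invocation of a version of the Tauberian theorem (e.g.\ Delange's, or the Wiener--Ikehara refinement in Serre's text) that permits virtual characters. All other steps are routine manipulation of Dirichlet series and partial summation.
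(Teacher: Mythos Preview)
The paper does not supply a proof of this statement; it is quoted as a black box from Serre \cite{MR1484415} and then immediately applied. Your sketch is essentially the standard argument that underlies Serre's result (logarithmic derivative, Tauberian theorem of Wiener--Ikehara/Delange type, strip off higher prime powers, partial summation), so there is nothing to compare against in the paper itself. One minor caution: the hypothesis as stated only gives holomorphy and nonvanishing on the \emph{closed} half-plane $\Re(s)\geq 1$, not on an open neighborhood of it, so when you invoke the Tauberian step you should use the version requiring only continuous extension to the line $\Re(s)=1$ (as in Serre's appendix) rather than meromorphic continuation past it.
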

We apply this to the five $L$-functions above, with the normalized image of $\Frob_v$ in $V$ having eigenvalues $\alpha_v, \alpha_v^{-1}, \beta_v, \beta_v^{-1}$, to get \[\sum_{q_v\leq n}(\alpha_v+\alpha_v^{-1}+\beta_v+\beta_v^{-1})=o(n/\log n)\] and four other asymptotic equations.  Combining with the statement of Serre's theorem for the trivial representation (namely, $\displaystyle\sum_{q_v\leq n}1=n/\log n+o(n/\log n)$), and letting $s_v=\alpha_v+\alpha_v^{-1}$ and $t_v=\beta_v+\beta_v^{-1}$ for convenience, we find the system\begin{align*}
\sum_{q_v\leq n}s_v+t_v&=o(n/\log n)\\
\sum_{q_v\leq n}s_vt_v+1&=o(n/\log n)\\
\sum_{q_v\leq n}s_v^2+2s_vt_v+t_v^2-1&=o(n/\log n)&\Rightarrow \sum_{q_v\leq n}s_v^2+t_v^2-3&=o(n/\log n)\\
\sum_{q_v\leq n}s_v^2t_v+s_vt_v^2+s_v+t_v&=o(n/\log n)&\Rightarrow \sum_{q_v\leq n}s_v^2t_v+s_vt_v^2&=o(n/\log n)\\
\sum_{q_v\leq n}s_v^2t_v^2+2s_vt_v&=o(n/\log n)&\Rightarrow \sum_{q_v\leq n}s_v^2t_v^2-2&=o(n/\log n)\end{align*}

\begin{proof}[Proof of Theorem 4.1]The identity \[(2-s)(2-t)(3s+3t+2-\varepsilon)=(8-4\varepsilon)+(8+2\varepsilon)(s+t)-6(s^2+t^2)-(10+\varepsilon)st+3(s^2t+st^2)\] holds, so \begin{align*}&\, \sum_{q_v\leq n}(2-s_v)(2-t_v)(3s_v+3t_v+2-\varepsilon)\\
&=\sum_{q_v\leq n}(8-4\varepsilon)+(8+2\varepsilon)(s_v+t_v)-6(s_v^2+t_v^2)-(10+\varepsilon)s_vt_v+3(s_v^2t_v+s_vt_v^2)\\
&=\sum_{q_v\leq n}3(s_v^2t_v+s_vt_v^2)-(10+\varepsilon)(s_vt_v+1)-6(s_v^2+t_v^2-3)+(8+2\varepsilon)(s_v+t_v)-3\varepsilon\\ &=(-3\varepsilon+o(1))\frac{n}{\log n}.\end{align*}

So if $-\frac{2}{3}<a_{1, \min}=-\frac{2}{3}+\frac{\varepsilon}{3}$, then a zero proportion of primes $v$ have $a_1=s_v+t_v<-\frac{2}{3}+\frac{\varepsilon}{3}$.  And the Weil bounds on the eigenvalues hold, meaning that the sum of the left side should be positive for large enough $n$, but the right side is negative for large enough $n$.  So it's impossible for $a_{1, \min}>-\frac{2}{3}$.  The same idea holds for $a_{1, \max}$; the asymptotics above are invariant under the transformation $(s_v, t_v)\rightarrow(-s_v, -t_v)$, so if it's impossible for most primes to have their $a_1$'s lie above $-\frac{2}{3}+\frac{\varepsilon}{3}$, then it's also impossible for most primes to have their $a_1$'s lie below $\frac{2}{3}-\frac{\varepsilon}{3}$.\end{proof}

\begin{proof}[Proof of Theorem 4.2]Similarly, the following two equations hold:\[(3st+2+\varepsilon)(st+4)=3s^2t^2+(14+\varepsilon)st+(8+4\varepsilon)\] \[(5st+6-\varepsilon)(4-st)=-5s^2t^2+(14+\varepsilon)st+(24-\varepsilon),\]so\begin{align*}\sum_{q_v\leq n}(3s_vt_v+2+\varepsilon)(s_vt_v+4)&=\sum_{q_v\leq n}(8+4\varepsilon)+(14+\varepsilon)s_vt_v+3s_v^2t_v^2\\
&=\sum_{q_v\leq n}3(s_v^2t_v^2-2)+(14+\varepsilon)(s_vt_v+1)+3\varepsilon\\ &=(3\varepsilon+o(1))\frac{n}{\log n}\end{align*}and\begin{align*}\sum_{q_v\leq n}(5s_vt_v+6-\varepsilon)(4-s_vt_v)&=\sum_{q_v\leq n}(24-\varepsilon)+(14+\varepsilon)s_vt_v-5s_v^2t_v^2\\
&=\sum_{q_v\leq n}-5(s_v^2t_v^2-2)+(14+\varepsilon)(s_vt_v+1)-5\varepsilon\\ &=(-5\varepsilon+o(1))\frac{n}{\log n}\end{align*}

If $s_vt_v\leq-\frac{2}{3}-\frac{\varepsilon}{3}$ for all but a density zero set of primes $v$, then in the first equation the left side would be negative for large $n$, but the right side is positive for large $n$, impossible.  So $s_vt_v>-\frac{2}{3}-\frac{\varepsilon}{3}$ a positive proportion of the time for every positive $\varepsilon$, and hence $a_2=2+s_vt_v>\frac{4}{3}-\frac{\varepsilon}{3}$ for a positive proportion of the time.  Thus $a_{2, \max}\geq\frac{4}{3}$.

And if $s_vt_v\geq-\frac{6}{5}+\frac{\varepsilon}{5}$ for all but a density zero set of primes $v$, then in the second equation the left side would be positive for large $n$, but the right side is negative for large $n$, impossible.  So $s_vt_v<-\frac{6}{5}+\frac{\varepsilon}{5}$ a positive proportion of the time for every positive $\varepsilon$, and hence $a_2=2+s_vt_v<\frac{4}{5}+\frac{\varepsilon}{5}$ for a positive proportion of the time.  Thus $a_{2, \min}\leq\frac{4}{5}$.\end{proof}

As stated in the introduction, these are the best possible theorems we may obtain with the asymptotics arising from Serre's method; namely, if \begin{align*}s_v=0\text{ and }t_v=2\text{ for }&\,\frac{1}{6}\text{ of all primes,}\\
s_v=-\frac{3}{2}\text{ and }t_v=2\text{ for }&\,\frac{4}{21}\text{ of all primes, and}\\
s_v=\frac{-1-\sqrt{7}}{3}\text{ and }t_v=\frac{-1+\sqrt{7}}{3}\text{ for }&\,\frac{9}{14}\text{ of all primes,}\end{align*}then\begin{align*}
\sum_{q_v\leq n}s_v+t_v&=\frac{(1+o(1))n/\log n}{6}(0+2)+\frac{(4+o(1))n/\log n}{21}\left(-\frac{3}{2}+2\right)\\ &+\frac{(9+o(1))n/\log n}{14}\left(\frac{-1-\sqrt{7}}{3}+\frac{-1+\sqrt{7}}{3}\right)=\left(\frac{2}{6}+\frac{2}{21}-\frac{6}{14}+o(1)\right)\frac{n}{\log n}=o\left(\frac{n}{\log n}\right)\\ \, \end{align*}and similar equalities hold for the other four asymptotics as well.  Because $a_{1, v}$ can only ever be $-\frac{2}{3}$, $\frac{1}{2}$ or $2$, $a_{1, \min}$ is $-\frac{2}{3}$, and we cannot prove anything stronger.

A mirror equality case holds in calculating $a_{1, \max}$, and similar equality cases hold in the cases of $a_{2, \min}$ and $a_{2, \max}$.  If\begin{align*}s_v=-2\text{ and }t_v=2\text{ for }&\,\frac{1}{10}\text{ of all primes,}\\
s_v=-\frac{1}{3}\text{ and }t_v=2\text{ for }&\,\frac{9}{35}\text{ of all primes, and}\\
s_v=\frac{-1-\sqrt{7}}{3}\text{ and }t_v=\frac{-1+\sqrt{7}}{3}\text{ for }&\,\frac{9}{14}\text{ of all primes,}\end{align*}then the equalities all hold as above, and $a_{2, \max}=\frac{4}{3}$ for this set.  And if \begin{align*}s_v=2\text{ and }t_v=2\text{ for }&\,\frac{1}{52}\text{ of all primes,}\\
s_v=-2\text{ and }t_v=-2\text{ for }&\,\frac{1}{52}\text{ of all primes,}\\
s_v=-\frac{3}{5}\text{ and }t_v=2\text{ for }&\,\frac{125}{767}\text{ of all primes, and}\\
s_v=\frac{-5-\sqrt{1495}}{35}\text{ and }t_v=\frac{-5+\sqrt{1495}}{35}\text{ for }&\,\frac{1225}{1534}\text{ of all primes,}\end{align*}it is not difficult to again check that all asymptotics above hold, and $a_{2, \min}=\frac{4}{5}$ for this set.

Therefore, with our current knowledge of modularity lifting theorems, we cannot say more than these theorems.
\begin{rmk}While Theorems 4.1 and 4.2 do the job of bounding $a_{1, \min}$, etc., from above or below, they are rather weak.  We expect $a_{1, \min}$ to be equal to $-4$, yet we can only currently show that $a_{1, \min}\leq-\frac{2}{3}$, and similarly for $a_{1, \max}$.  We also expect $a_{2, \max}=6$, but we can only show that $a_{2, \max}\geq\frac{4}{3}$; and we expect $a_{2, \min}=-2$, but we can only show that $a_{2, \min}\leq\frac{4}{5}$.

Notice also that we used heavily the fact that \textbf{A} was generic, because if it were not, neither the 4-dimensional representation $V$ nor the 5-dimensional representation $W$ would need be irreducible.  Because we know the Sato-Tate conjecture in all cases except \textbf{A} and \textbf{B}$[C_2]$, we can calculate $a_{1, \min/\max}$ and $a_{2, \min/\max}$ for abelian surfaces of these types; for any abelian surface in cases \textbf{E} or \textbf{F}, where the normalized eigenvalues of Frobenius are always $2$ copies of $\alpha$ and $2$ copies of $\alpha^{-1}$, $a_{2, \max}$ is still $6$ as expected, but $a_2=4+\alpha^2+\alpha^{-2}$, so we expect (and deduce) that $a_{2, \min}=2$, so Theorem 4.2 doesn't hold if our abelian surface is not generic.\end{rmk}

\subsection{The case \textbf{B}$[C_2]$}
We now suppose our abelian variety $A$ over totally real field $F$ has Sato-Tate group $\langle\SU(2)\times\SU(2),J\rangle$.  We may still apply Theorem $4.5$, so that $A$ is potentially modular.  We of course base change to a totally real field extension $F'$ where $A$ is modular and the Tate module representation is cuspidal.  Then, as before, the representation $\rho_{A, \ell}$ is induced from a representation $\rho_{A_L, \lambda}$.  This means that $\rho_{A, \ell}\simeq \rho_{A, \ell}\tensor\chi_{L/K}$.  On the level of automorphic representations, this means that the cuspidal representation $\Pi$ coming from $\rho$ also satisfies $\Pi\simeq\Pi\tensor\chi_{L/K}$.  But this means that $\Pi$ is the base change of some cuspidal representation $\pi$ of $\GL(2)$ over $L$.

This representation $\pi$ arises from the compatible system of representations $(\rho_{A, \lambda})_{\lambda}$, and since these have big image because we're in case \textbf{B}[$C_2$], we know that the representations $\rho_{A, \lambda}$, and more generally $\Sym^k\rho_{A, \lambda}$ for any $k\geq 1$, are not induced from any character.  This means that $\Sym^k\rho_{A, \lambda}\not\simeq\Sym^k\rho_{A, \lambda}\tensor\chi$ for any character $\chi$.  We recall theorems of Kim-Shahidi:

\begin{thm}[\cite{MR1890650} Theorem 2.2.2]Let $\pi$ be a cuspidal automorphic representation of $\GL(2, \mathbb{A}_L)$, let $\omega_{\pi}$ denote the central character, and let $A^i(\pi)=\Sym^i(\pi)\tensor\omega_{\pi}^{-1}$.  Then $A^3(\pi)$ is not cuspidal if and only if there exists a nontrivial gr{\"o}ssencharacter $\mu$ such that $A^2(\pi)\simeq A^2(\pi)\tensor\mu$.\end{thm}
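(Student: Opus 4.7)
The plan is to leverage the key automorphic identity linking the symmetric cube and symmetric square. At the level of $\GL_2$-representations we have $\mathrm{Sym}^1 \otimes \mathrm{Sym}^2 \cong \mathrm{Sym}^3 \oplus (\mathrm{Sym}^1 \otimes \det)$, which by the existence of the Kim--Shahidi symmetric cube lift and the Rankin--Selberg functorial product translates to the isobaric identity
\[
\pi \boxtimes A^2(\pi) \;=\; A^3(\pi) \boxplus \pi
\]
as automorphic representations of $\GL_6(\mathbb{A}_L)$. The $\pi$ summand on the right is always present, so the cuspidality question for $A^3(\pi)$ is equivalent to asking whether $\pi\boxtimes A^2(\pi)$ has exactly two isobaric constituents (namely $\pi$ and a cuspidal $4$-dimensional piece) or strictly more.

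For the $(\Leftarrow)$ direction, suppose $A^2(\pi) \simeq A^2(\pi) \otimes \mu$ for some nontrivial idele class character $\mu$. Taking determinants of this $3$-dimensional isomorphism forces $\mu^3 = 1$, so $\mu$ has exact order $3$. By solvable automorphic induction (Arthur--Clozel), there exists a cyclic cubic extension $L'/L$ cut out by $\mu$ and a Hecke character $\chi$ of $\mathbb{A}_{L'}^{\times}$ such that $A^2(\pi) = \mathrm{AI}_{L'/L}(\chi)$. Substituting into the identity and invoking the projection formula for automorphic induction rewrites $\pi\boxtimes A^2(\pi)$ as $\mathrm{AI}_{L'/L}(\pi_{L'}\otimes \chi)$. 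Comparing isobaric decompositions then forces a further splitting in $A^3(\pi)$: either $\pi_{L'}\otimes\chi$ is $\mathrm{Gal}(L'/L)$-invariant (in which case the induction is manifestly non-cuspidal) or the induction is a cuspidal sextic that cannot accommodate the isolated $\pi$ summand without $A^3(\pi)$ itself splitting. Either way $A^3(\pi)$ fails to be cuspidal.

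For the $(\Rightarrow)$ direction, assume $A^3(\pi) = \sigma_1 \boxplus \cdots \boxplus \sigma_r$ with $r\geq 2$, where each $\sigma_i$ is cuspidal. The central character of $A^3(\pi)$ and the essential self-duality $A^3(\pi)^\vee \simeq A^3(\pi)\otimes \omega_\pi^{2}$ (inherited from the self-duality of the standard representation of $\GL_2$) cut down the possible dimension partitions to $(3,1)$, $(2,2)$, $(2,1,1)$, and $(1,1,1,1)$. In each case one compares the isobaric decomposition of $\pi\boxtimes A^2(\pi) = A^3(\pi)\boxplus \pi$ with the structure theory of Rankin--Selberg products on $\GL_2\times\GL_3$ (Kim--Shahidi, Ramakrishnan): the Jacquet--Shalika bounds and central character constraints force the cuspidal constituents to be obtained from automorphic induction from a cubic cyclic extension, which then exhibits the required self-twist $\mu$ of $A^2(\pi)$. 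The main obstacle lies precisely here: ruling out the $(2,2)$ and $(2,1,1)$ partitions in a clean way, where the appropriate character $\mu$ must be extracted from the $\mathrm{Gal}$-equivariance structure of the cuspidal summands rather than read off directly. Once this structural analysis is in place, the equivalence follows.
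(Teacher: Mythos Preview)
The paper does not prove this statement at all: it is quoted verbatim as Theorem~2.2.2 of Kim--Shahidi \cite{MR1890650} and used as a black box. There is therefore no ``paper's own proof'' to compare your attempt against.

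As for your sketch on its own merits: the organizing identity $\pi\boxtimes A^2(\pi)\simeq A^3(\pi)\boxplus\pi$ is exactly the one Kim--Shahidi exploit, and your $(\Leftarrow)$ argument is essentially correct once one handles the case where $A^2(\pi)$ is already non-cuspidal (i.e.\ $\pi$ dihedral), which you have silently assumed away before invoking Arthur--Clozel. Your $(\Rightarrow)$ direction, however, is not a proof but a programme: you list the possible partitions, assert that ``Jacquet--Shalika bounds and central character constraints'' force induction from a cubic extension, and then explicitly flag the $(2,2)$ and $(2,1,1)$ cases as unresolved. That is precisely where the content of Kim--Shahidi's argument lives; their actual proof uses the cuspidality criterion of Ramakrishnan--Wang for the $\GL_2\times\GL_3$ functorial product together with a delicate analysis of when $\pi\boxtimes A^2(\pi)$ can have a $\GL_2$ constituent. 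Without supplying that analysis, your $(\Rightarrow)$ direction has a genuine gap.
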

\begin{thm}[\cite{MR1890650} Theorem 3.3.7]With notation as above, $A^4(\pi)$ is a cuspidal representation of $\GL(5, \mathbb{A}_L)$ unless\begin{enumerate}[(1)]\item There is some nontrivial gr{\"o}ssencharacter $\eta$ with $\pi\tensor\eta\simeq\pi$
\item $A^3(\pi)$ is not cuspidal
\item $A^3(\pi)$ is cuspidal, but there is some nontrivial quadratic gr{\"o}ssencharacter $\eta$ with\\ $A^3(\pi)\simeq A^3(\pi)\tensor\eta$\end{enumerate}\end{thm}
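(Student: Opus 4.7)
The plan is as follows. By Kim's earlier work, $A^4(\pi)$ is known to exist as an automorphic representation of $\GL(5,\mathbb{A}_L)$, so the content of the theorem is purely the cuspidality criterion. Using the Jacquet--Shalika classification, write $A^4(\pi)\simeq\pi_1\boxplus\cdots\boxplus\pi_r$ as an isobaric sum of pairwise non-isomorphic unitary cuspidal representations $\pi_i$ on $\GL(n_i,\mathbb{A}_L)$ with $\sum n_i=5$; cuspidality of $A^4(\pi)$ is the statement that $r=1$, and the partitions of $5$ to rule out are $(4,1)$, $(3,2)$, $(3,1,1)$, $(2,2,1)$, $(2,1,1,1)$, and $(1,1,1,1,1)$.

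The central tool is the Clebsch--Gordan decomposition for $\SL_2(\C)$-representations,
\[\Sym^4\tensor\Sym^4\cong\mathbf{1}\oplus\Sym^2\oplus\Sym^4\oplus\Sym^6\oplus\Sym^8,\]
together with companion identities like $\Sym^2\tensor\Sym^3\cong\Sym^1\oplus\Sym^3\oplus\Sym^5$ and $\Sym^3\tensor\Sym^3\cong\mathbf{1}\oplus\Sym^2\oplus\Sym^4\oplus\Sym^6$. Via the Langlands--Shahidi method, these yield factorizations of Rankin--Selberg L-functions of the shape $L^S(s,A^i(\pi)\times A^j(\pi))=\prod_k L^S(s,A^{|i-j|+2k}(\pi))$. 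In particular, the order of the pole at $s=1$ of $L^S(s,A^4(\pi)\times A^4(\pi)^\vee)$ equals $r$, while on the right-hand side of the factorization the poles come from precisely those $A^{2k}(\pi)$ whose isobaric decomposition contains the trivial representation.

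First I would show, assuming (1), (2), and (3) all fail, that each of $L^S(s,A^{2k}(\pi))$ for $k=1,2,3,4$ is holomorphic at $s=1$. The cases $k=1,2$ rest on cuspidality of $A^2(\pi)$ (Gelbart--Jacquet, when (1) fails) and of $A^3(\pi)$ (Theorem 2.2.2 above, when (1) and (2) fail), combined with the Godement--Jacquet fact that standard L-functions of cuspidal representations on $\GL_n$ with $n\geq 2$ are entire. The higher symmetric powers $A^5,A^6,A^7,A^8$ are then accessed by bootstrapping through the mixed Rankin--Selberg L-functions $L^S(s,A^i(\pi)\times A^j(\pi))$ with $i\neq j$, which are entire, using the companion plethysm identities to pin down the pole structure of each $A^{2k}$ inductively. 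Putting this together, the right-hand side of the main factorization contributes exactly one pole at $s=1$, forcing $r=1$.

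The hard part will be the converse direction: verifying that each of the three exceptional conditions genuinely obstructs cuspidality, and matching it to an explicit isobaric partition. This amounts to a case-by-case analysis via the finite-subgroup classification of $\PGL_2(\C)$. Condition (1) forces $\pi$ to be of dihedral type, condition (2) (through Theorem 2.2.2) forces $\pi$ to be of tetrahedral type, and condition (3) forces $\pi$ to be of octahedral type; in each case one computes $\Sym^4$ of the associated projective representation directly to read off the isobaric decomposition of $A^4(\pi)$. The octahedral case is the most delicate, as it is the one place where the bootstrapping above breaks down: the relevant $A^{2k}$ fails to be cuspidal in precisely the way that reintroduces a trivial summand into the right-hand side factorization, obstructing the equality $r=1$.
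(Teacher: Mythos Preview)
The paper does not prove this statement at all: it is quoted verbatim as \cite[Theorem~3.3.7]{MR1890650} and used as a black box (along with the companion Theorem~2.2.2 on $A^3(\pi)$) solely to deduce that $A^2(\pi)$, $A^3(\pi)$, and $A^4(\pi)$ are cuspidal for the particular $\pi$ arising from $(\rho_{A,\lambda})_\lambda$. There is therefore no proof in the paper to compare your proposal against.

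That said, your sketch is a reasonable outline of the actual Kim--Shahidi argument: the pole-counting via $L^S(s,A^4(\pi)\times A^4(\pi)^\vee)$ and the Clebsch--Gordan factorizations are precisely the mechanism they use, and the identification of the exceptional cases with dihedral, tetrahedral, and octahedral types is correct. If you intend to include a proof, you should be aware that the ``bootstrapping'' step for $A^6$ and $A^8$ is not quite as clean as you suggest, since those symmetric powers are not themselves known to be automorphic in general; Kim--Shahidi circumvent this by working directly with the Langlands--Shahidi $L$-functions rather than with isobaric decompositions of $A^6$ and $A^8$. But for the purposes of this paper, no proof is needed---the citation suffices.
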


Therefore, $A^2(\pi), A^3(\pi)$ and $A^4(\pi)$ are all automorphic.  And because $\Sym^k\rho_{A, \lambda}$ is not isomorphic to its own twist, neither is $\Sym^k\pi$.  So we obtain that $A^2(\pi), A^3(\pi)$ and $A^4(\pi)$ are cuspidal.

In the same way as above, if $\alpha_v, \alpha_v^{-1}$ are the eigenvalues of $\rho_{A_L, \lambda}(\Frob_v)q_v^{-1/2}$ for primes $v$ of $L$, and $\beta_v$, $\beta_v^{-1}$ are the eigenvalues of $\rho_{A_L, \overline{\lambda}}(\Frob_v)q_v^{-1/2}$, and for simplicity we denote $x_v=\alpha_v+\alpha_v^{-1}$ and $y_v=\beta_v+\beta_v^{-1}$, then via Rankin-Selberg we find that if $0\leq k, l\leq 4$ or if one of $k, l$ equals $0$ and the other is at most $8$, then \[\sum_{q_v<n}x_v^ky_v^l=\begin{cases}(C_{k/2}C_{l/2}+o(1))\frac{n}{\ln n},&k, l\text{ both even}\\ \frac{o(1)n}{\ln n},&\text{one of }k, l\text{ odd}\end{cases}\]where $C_n=\frac{1}{n+1}\binom{2n}{n}$ is the $n$'th Catalan number.

\begin{proof}[Proof of Theorem 4.3]Let\begin{align*}Q(x, y)&=-12.543(x+y)+53.838(x^2+y^2)-12.954(x^3+y^3)-13.063(x^4+y^4)-7.914(x^5+y^5)\\
&-2.9(x^6+y^6)+3.607(x^7+y^7)+1.575(x^8+y^8)+124.68xy-183.789(x^2y+y^2x)\\
&+1.878(x^3y+y^3x)+50.255(x^4y+y^4x)+117.628x^2y^2+73.149(x^3y^2+y^3x^2)\\
&-48.646(x^4y^2+y^4x^2)-65.928x^3y^3+8.734(x^4y^3+y^4x^3)+1.098x^4y^4\end{align*}(All decimals are exact, unless otherwise noted.)  It's easy to check that the minimum of $Q(x, y)$ on the set $\{x, y\in [-2, 2]:x+y\geq -2.47\}$ is when $x\approx-1.81913$ and $y\approx0.644208$, giving a minimum of approximately $-1.93656$, and yet the sum \[\sum_{q_v<n}Q(x_v, y_v)=\frac{(-2.04+o(1))n}{\ln n}.\]So it is impossible for $x_v+y_v$ to always be $\geq -2.47$, and therefore $a_{1, \min}\leq -2.47$.  And each asymptotic above is invariant under $(x, y)\rightarrow (-x, -y)$, so a mirror polynomial proves that $a_{1, \max}\geq 2.47$.\end{proof}
\begin{proof}[Proof of Theorem 4.4]Let \begin{align*}R(x, y)&=-24.04(x^2+y^2)+39.64(x^4+y^4)-13.14(x^6+y^6)+3.82(x^8+y^8)-15.76xy\\&-119.88(x^3y+y^3x)+484.32x^2y^2-153.28(x^4y^2+y^4x^2)+192.44x^3y^3+8.2x^4y^4\end{align*}It's easy to check that the minimum of $R(x, y)$ on the set $\{x, y\in[-2, 2]:xy\geq -1.57\}$ is when $x\approx0.907648$ and $y\approx0.188967$, for a minimum of approximately $-8.32369$, and yet the sum\[\sum_{q_v<n}R(x_v, y_v)=\frac{(-9.96+o(1))n}{\ln n}.\]So it is impossible for $x_vy_v$ to always be $\geq -1.57$, and therefore $a_{2, \min}\leq -1.57+2=0.43$.  And each asymptotic above is invariant under $(x, y)\rightarrow (-x, y)$, so a mirror polynomial proves that $a_{2, \max}\geq 3.57$.\end{proof}
\newpage\section{Appendix}
The polynomials we used to prove Theorems $4.3$ and $4.4$ appear rather arbitrary; besides the fact that they work, they give no indication of how strong the results are, how tight the bounds of $2.47$ and $1.57$ are.  There are two questions this appendix answers: the tightness of these bounds (in a similar manner to how we showed $4.1$ and $4.2$ gave the best known bounds in the generic case), and the method used to derive them.

Let $V$ be the set $\{(x, y)\in[-2, 2]\times[-2,2]:x+y\geq u\}$ or $\{(x, y)\in[-2, 2]\times[-2,2]:xy\geq v\}$ for some $u$ or $v$, and let $f:[-2, 2]\times[-2, 2]\rightarrow\mathbb{R}^{32}$ via \[(x, y)\rightarrow(x, y, x^2, xy, y^2, \ldots x^8, x^4y^4, y^8).\]  Either the convex hull of $f(V)$ contains $O=(0, 0, 1, 0, 1, 0, 0, \ldots 14, 4, 14)$, or it does not.  If $O$ is contained in the convex hull of $f(V)$, by Caratheodory's theorem, it can be written as the convex combination of $33$ points in the image $f(V)$.  These points give us pairs $(x, y)$ and coefficients, or probabilities, which satisfy the asymptotics that we derived.  So we would be unable to prove that $a_{1, \min}$ or $a_{2, \min}$ was any smaller than $u$ or $v+2$.

On the other hand, if $O$ is not contained in the convex hull of $f(V)$, there is a hyperplane separating $O$ from this convex hull.  Namely, there is some linear combination of the $32$ coordinates which is smaller than some constant $c$ for $O$, and larger than $c$ for every point in $f(V)$.  This hyperplane gives us a polynomial with which we may prove upper bounds for $a_{1, \min}$ and $a_{2, \min}$, as we did when we proved Theorems $4.1$ through $4.4$.

Because increasing $u$ or $v$ only shrinks $V$, the sets of $u$ and $v$ for which $O$ is contained in the convex hull form intervals, as do the sets of $u$ and $v$ where $O$ is not contained.  Therefore, the supremum of the former is the infimum of the latter, and for that $u$ or that $v$, we obtain both a proof and an example, and this is the best we can hope for.  In the case of the generic abelian surface, the upper and lower bounds we obtained were easy rational numbers, but there's no reason to suspect this to be the case for a \textbf{B}$[C_2]$ surface.  The rest of this appendix is devoted to finding tight provable bounds on the suprema. 

We first prove that $a_{1, \min}$ will be less than or equal to $-2.4763827913319\ldots$.  Look at the polynomial $P_1(x, y)$, symmetric in $x$ and $y$, with the following (exact) coefficients (unfilled for $x^iy^j$ where $j>i$):

$\begin{array}{c|ccccc}\,&1&y&y^2&y^3&y^4\\ \hline
1&0&\,&\,&\,&\, \\
x&-9.6430622783853&108.9702541224326&\,&\,&\, \\
x^2&49.2216326267277&-180.0171980017891&125.0609266454326&\,&\, \\
x^3&-9.225013979636&6.9445854923998&68.1838852970187&-66.0585984730189&\, \\
x^4&-11.7940568488902&49.3497768306&-48.7776655621495&9.217112694634&1\\
x^5&-10.4048835085938&\,&\,&\,&\, \\
x^6&-3.4018229998967&\,&\,&\,&\, \\
x^7&4.1057063608821&\,&\,&\,&\, \\
x^8&1.7252053549918&\,&\,&\,&\, \end{array}$
The minimum of $P_1(x, y)$ on $\{(x, y)\in[-2, 2]\times[-2,2]:x+y\geq-2.4763827913319\}$ is approximately $-0.495177804465548$, at $x=y\approx 1.122946224307864$.  However, $\displaystyle\sum_{q_v<n}P_1(x_v, y_v)=(-0.4951778044674+o(1))\frac{n}{\ln n}$.

On the other hand, there is a set of $33$ points $(x, y)$ within $[-2, 2]\times [-2, 2]$ whose coordinate sum is always at least $-2.4763827913320$, plotted below, for which the $32$-vector\[(0, 0, 1, 0, 1, 0, 0, 0, 0, 2, 0, 1, 0, 2, 0, 0, 0, 0, 0, 0, 5, 2, 0, 2, 5, 0, 0, 0, 0, 14, 4, 14)\]is inside the convex hull of the points given by \begin{align*}(x, &y, x^2, x y, y^2, x^3, x^2 y, x y^2, y^3, x^4, x^3 y, x^2 y^2, x y^3, y^4, x^5, x^4 y, x^3 y^2,\\
x^2 y^3,& x y^4, y^5, x^6, x^4 y^2, x^3 y^3, x^2 y^4, y^6, x^7, x^4 y^3, x^3 y^4, y^7, x^8, x^4 y^4, y^8):\end{align*}
\begin{figure}[h]
	\centering
		\includegraphics[scale=0.20]{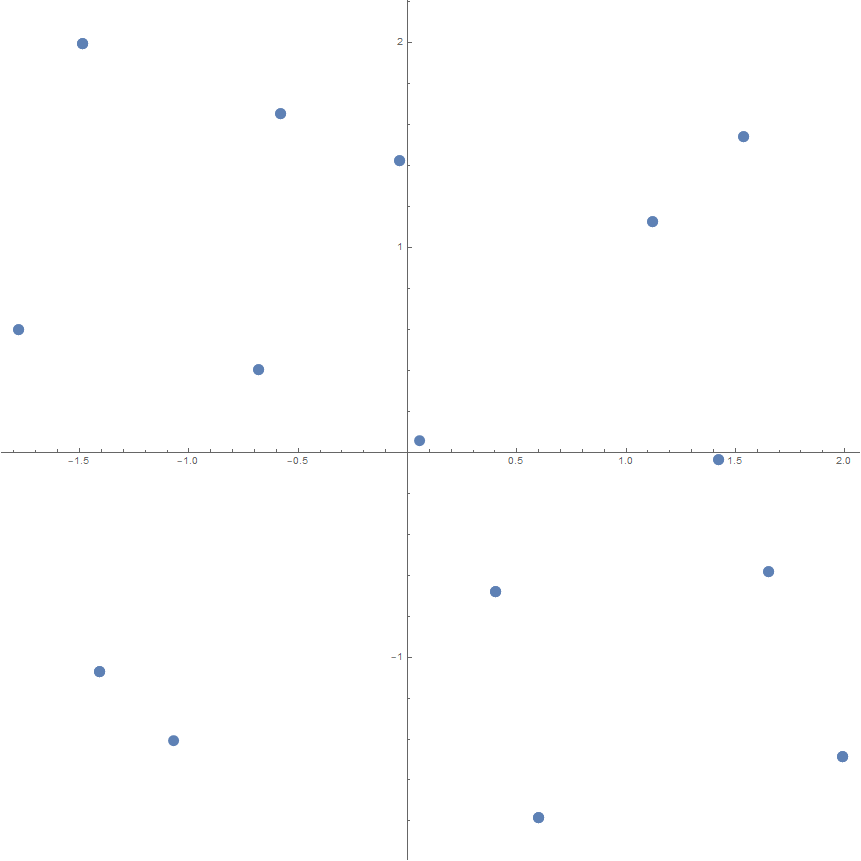}
\end{figure}

\begin{align*}(0.40233388785758, -0.68162727157206)&, (-0.68162490825764, 0.40233317377632), \\
(-0.03593446385013, 1.4223373527278)&, (-0.58181793464029, 1.65045045907013), \\
(0.59759350821447, -1.78077844166752)&, (1.53829446803677, 1.53829443533382), \\
(-1.48621983094263, 1.99140650840038)&, (1.42233731135369, -0.03593490350603), \\
(0.05438775487699, 0.05438886977203)&, (-1.78077900893326, 0.59759354704086), \\
(1.99140617335252, -1.4862186561741)&, (-1.40798021804983, -1.06840257328206), \\
(0.59759347905152, -1.78077910495742)&, (-1.48621965507992, 1.99140661320125), \\
(1.12294676572784, 1.12294624842174)&, (1.42233869303903, -0.03593747650892), \\
(1.65045062298356, -0.58181827821828)&, (0.40233336753712, -0.6816247052117), \\
(-0.68162690245729, 0.40233386944888)&, (0.40233322519093, -0.681625459605), \\
(1.12294556005286, 1.12294583096732)&, (-0.03593985561373, 1.42233955543634), \\
(-0.58181951983038, 1.65045131684197)&, (1.99140650540332, -1.48621961266743), \\
(1.65045237751894, -0.58182231464692)&, (-1.78077877657724, 0.59759334939547), \\
(-1.06840211927449, -1.4079806720575)&, (1.53829124404177, 1.53829176844063), \\
(-1.40797805061751, -1.06840474071448)&, (1.9914061436234, -1.48622121985896), \\
(0.59759450830252, -1.7807799162483)&, (-1.4862239646326, 1.99140795748714), \\
(-1.40798623761224, -1.06839655371975)&\end{align*}

(Again, all coordinates exact.)  So in fact the best we can prove here is just that $a_{1, \min}\leq -2.4763827913319$, and that $a_{1, \max}\geq 2.4763827913319$ with a mirror polynomial and points.

Similarly, let $P_2(x, y)$ denote the polynomial with (exact) coefficients below which is symmetric in $x$ and $y$:

$\begin{array}{c|ccccc}\,&1&y&y^2&y^3&y^4\\ \hline
1&0&\,&\,&\,&\, \\
x&0&-0.9148488345531369&\,&\,&\, \\
x^2&-2.0489539067392863&0&44.9702636684728257&\,&\, \\
x^3&0&-10.7425748658745577&0&16.8692193520802346&\, \\
x^4& 3.6839213331709682&0&-14.3548663298347627&0&1\\
x^5&0&\,&\,&\,&\, \\
x^6&-1.4264194026272393&\,&\,&\,&\, \\
x^7&0&\,&\,&\,&\, \\
x^8&0.4106920221952855&\,&\,&\,&\, \end{array}$

The minimum of $P_2(x, y)$ on the set $\{(x, y)\in[-2, 2]\times[-2,2]:xy\geq -1.578548220646049\}$ is at $x\approx -1.647233715535326, y\approx -0.553436099672013$, with a value of approximately $-0.576241536465307$, but $\displaystyle\sum_{q_v<n}P_2(x_v, y_v)=(-0.5762415364653239+o(1))\frac{n}{\ln n}$.

But the set of points $(x, y)$ listed (exactly) and plotted below also has the $32$-vector\[(0, 0, 1, 0, 1, 0, 0, 0, 0, 2, 0, 1, 0, 2, 0, 0, 0, 0, 0, 0, 5, 2, 0, 2, 5, 0, 0, 0, 0, 14, 4, 14)\]inside the convex hull of the points given by \begin{align*}(x, &y, x^2, x y, y^2, x^3, x^2 y, x y^2, y^3, x^4, x^3 y, x^2 y^2, x y^3, y^4, x^5, x^4 y, x^3 y^2,\\
x^2 y^3,& x y^4, y^5, x^6, x^4 y^2, x^3 y^3, x^2 y^4, y^6, x^7, x^4 y^3, x^3 y^4, y^7, x^8, x^4 y^4, y^8):\end{align*}

\begin{figure}[h]
	\centering
		\includegraphics[scale=0.16]{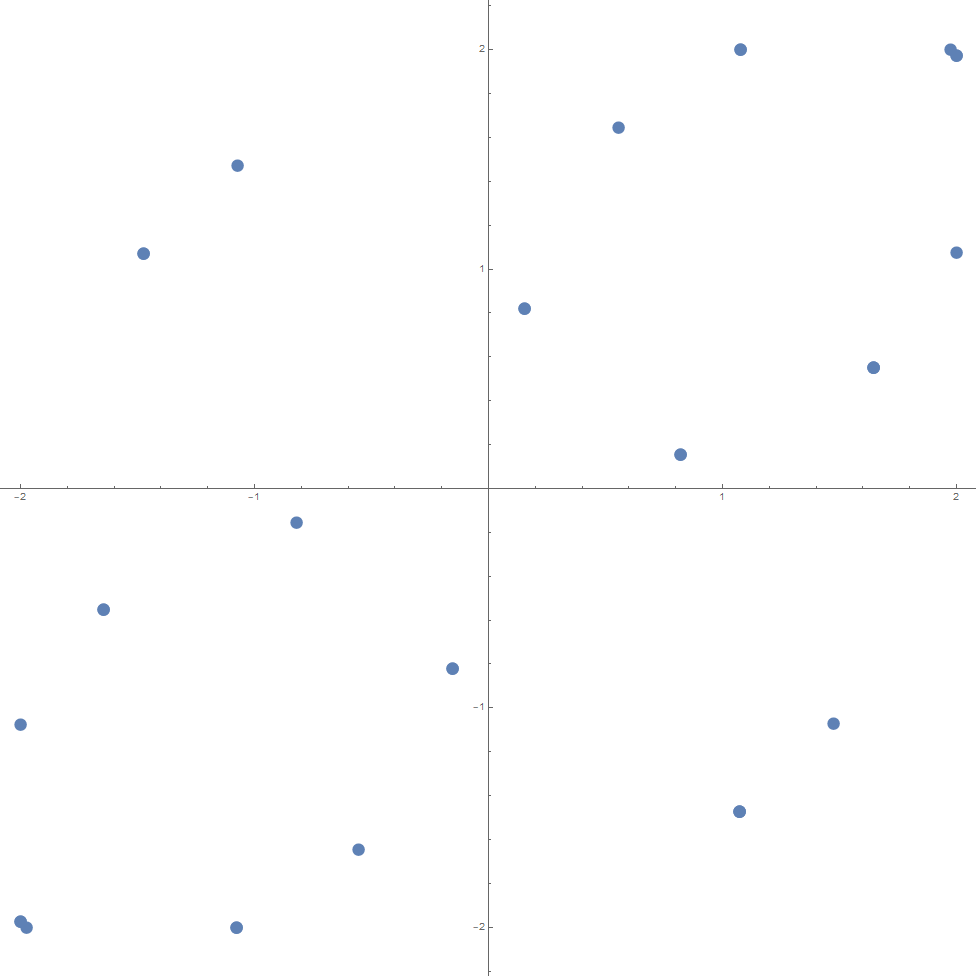}
\end{figure}

\begin{align*}
&(0.15506049352336642, 0.82103437036363329), (-1.07751316618925008, -2),\\ &
(-0.55343613654977384, -1.64723374649387681), (-0.15506048529352139, -0.82103434384587391), \\ &
(1.64723372391649941, 0.5534361137417255), (-1.9731805874505989, -2), \\ &
(0.82103437282171523, 0.15506048372780666), (1.07134858923922885, -1.47342166359409476), \\ &
(-0.82103433524791835, -0.15506047870672044), (0.15506046708915971, 0.82103431347805562), \\ &
(2, 1.07751316910812552), (-2, -1.07751316683949198), \\ &
(1.64723377312861428, 0.55343615943553201), (0.55343612726989892, 1.64723373754207403), \\ &
(1.97318058013085052, 2), (1.07751316252419741, 2), \\ &
(0.82103432514680383, 0.1550604774370247), (1.07134859824340295, -1.47342165121068827), \\ &
(-1.64723372758280026, -0.5534361192094094), (-1.64723373704703668, -0.55343612092654846), \\ &
(-0.15506049299061308, -0.82103439731133801), (-2, -1.97318058198809136), \\ &
(2, 1.9731805809913704), (1.47342165529506514, -1.07134859527358692), \\ &
(1.07751320597324756, 2), (-1.47342164800053422, 1.07134860057755775), \\ &
(-2, -1.97318060745136972), (-1.47342169316353921, 1.07134856773881076), \\ &
(-1.0713485853406953, 1.47342166895573339), (-1.0775132033440292, -2), \\ &
(1.64723371168303767, 0.55343607239179169), (2, 1.97318056276917512), \\ &
(1.07134859374131587, -1.47342165740239169)\end{align*}all of whose products are at least $-1.5785482206460513$.  And sending $x$ to $-x$ and leaving $y$ alone gives us a polynomial and points to prove the lower bound for $a_{2, \max}$.

So we've shown that the bounds $a_{1, \min}\leq -2.4763827913319$, $a_{1, \max}\geq 2.4763827913319$, $a_{2, \min}\leq 0.421451779353951$, and $a_{2, \max}\geq 3.578548220646051$ are approximately the best we can prove with the theorems of Kim-Shahidi.
\nocite{*}
\bibliographystyle{alpha}
\bibliography{LaTeX1bib}
\end{document}